\newcommand{\eps}{\varepsilon}
\renewcommand{\phi}{\varphi}
\renewcommand{\kappa}{\varkappa}
\renewcommand{\setminus}{\smallsetminus}
\newcommand{\xhat}{\bm\hat{x}}
\newcommand{\yhat}{\bm\hat{y}}
\newcommand\blfootnote[1]{%
  \begingroup
  \renewcommand\thefootnote{}\footnote{#1}%
  \addtocounter{footnote}{-1}%
  \endgroup
}
\theoremstyle{plain}
\newtheorem{theorem}{Theorem}[section]
\newtheorem{corollary}[theorem]{Corollary}
\newtheorem{lemma}[theorem]{Lemma}
\newtheorem{proposition}[theorem]{Proposition}
\newtheorem{conjecture}[theorem]{Conjecture}
\theoremstyle{definition}
\newtheorem{definition}[theorem]{Definition}
\newtheorem{problem}[theorem]{Problem}
\begin{document}

\title{Family of chaotic maps from game theory}

\author[T. Chotibut]{Thiparat Chotibut}
\address[T. Chotibut]{Engineering Systems and Design, Singapore
  University of Technology and Design, 8 Somapah Road, Singapore 487372}
\email{thiparatc@gmail.com, thiparat\_chotibut@sutd.edu.sg}

\author[F. Falniowski]{Fryderyk Falniowski}
\address[F. Falniowski]{Department of Mathematics, Cracow University
  of Economics, Ra\-ko\-wicka~27, 31-510 Krak\'ow, Poland}
\email{fryderyk.falniowski@uek.krakow.pl}

\author[M. Misiurewicz]{Micha{\l} Misiurewicz}
\address[M. Misiurewicz]{Department of Mathematical Sciences, Indiana
  University-Purdue University Indianapolis, 402 N. Blackford
  Street, Indianapolis, IN 46202, USA}
\email{mmisiure@math.iupui.edu}

\author[G. Piliouras]{Georgios Piliouras}
\address[G. Piliouras]{Engineering Systems and Design, Singapore
  University of Technology and Design, 8 Somapah Road, Singapore 487372}
\email{georgios@sutd.edu.sg}

\maketitle


\begin{abstract}
From a two-agent, two-strategy congestion game where both agents apply
the multiplicative weights update
algorithm, we obtain a two-parameter family of maps of the unit square
to itself. Interesting dynamics arise on the invariant diagonal, on
which a two-parameter family of bimodal interval maps exhibits
periodic orbits and chaos. While the fixed point $b$ corresponding to
a Nash equilibrium of such map $f$ is usually repelling, it is
globally \emph{Ces\`{a}ro attracting} on the diagonal, that is,
\[
\lim_{n\to\infty}\frac1n\sum_{k=0}^{n-1}f^k(x)=b
\]
for every $x$ in the minimal invariant interval. This solves a known open
question whether there exists a nontrivial smooth map other than
$x\mapsto axe^{-x}$ with centers of mass of all periodic orbits
coinciding. We also study the dependence of the dynamics on the two
parameters.
\end{abstract}

\blfootnote{
{\bf 2010 Mathematics Subject Classification}: Primary: 37E05, Secondary: 91A05}
\blfootnote{
{\bf Keywords}: Chaos; Interval maps; Center of mass; Multiplicative weights;
Congestion game.
}

\section{Introduction}\label{s:intro}

We will consider a $2\times2$ (two agents/players, two pure strategies)
\emph{congestion game} (see~\cite{ros,AGT}) and examine its dynamics under
a specific model of learning behavior for both agents known as
\emph{multiplicative weights update}~\cite{Arora05themultiplicative}.

We will start off with the description of the two agent game. Let $x$
be the probability that the first player (agent) chooses the first
strategy (he chooses the second strategy with probability $1-x$).
Similarly, let $y$ be the probability that the second player (agent)
chooses the first strategy (he chooses the second strategy with
probability $1-y$). We denote a tuple of such randomized strategies as
$(x,y)$. If the agents choose the same strategy, this leads to
congestion, and the cost increases. We will assume that the cost of a
strategy is proportional to its load, i.e., to the number of agents
choosing this strategy. If we denote by $c(i,j)$ the expected cost of
the player number $i$ playing the strategy number $j$, and the
coefficients of proportionality are $\alpha,\beta$, then we get
\begin{equation}\label{cost}
\begin{aligned}
c(1,1)&=\alpha(1+y), & c(2,1)&=\alpha(1+x),\\
c(1,2)&=\beta(1+(1-y))=\beta(2-y),\ \ & c(2,2)&=
\beta(1+(1-x))=\beta(2-x).
\end{aligned}
\end{equation}

A strategy profile/tuple $(x,y)$ is a \emph{Nash equilibrium} if and
only if no agent can strictly decrease their expected cost by
unilaterally deviating to another strategy.

\subsection{Multiplicative Weights Update}

We will assume that both agents participate in the game by applying
the \emph{multiplicative weights update} (MWU) algorithm
\cite{Arora05themultiplicative}. At time $n$, the first (second)
player/agent chooses the first strategy with probability $x_n$
($y_n$). We will study MWU in the full informational setting, where
each agent gets to experience the cost of all strategies available to
them. Moreover, the (expected) cost of strategy j for agent $1$
(respectively, $2$) at time $n$ is computed given the randomized
choice of agent $2$ (respectively, $1$). In this case, the update of
the mixed/randomized strategies of each agent when applying MWU with
learning rate $\eps\in(0,1)$ is as follows:
\begin{equation}\label{mwu}
\begin{aligned}
x_{n+1}&=\frac{x_n(1-\eps)^{c(1,1)}}{x_n(1-\eps)^{c(1,1)}+
(1-x_n)(1-\eps)^{c(1,2)}},\\
y_{n+1}&=\frac{y_n(1-\eps)^{c(2,1)}}{y_n(1-\eps)^{c(2,1)}+
(1-y_n)(1-\eps)^{c(2,2)}}.
\end{aligned}
\end{equation}

Observe that in this way large cost at time $n$ leads to the decrease
of the probability of doing the same at time $n+1$. The learning rate
$\eps$ can be thought of as capturing the patience of the agents. For
small $\eps$ the agents adapt slowly to the costs whereas for large
$\eps$ they respond more aggressively. We will study the effects of
this learning rate $\eps$ to the stability of the MWU dynamics.
In~\cite{PPP} it was shown that there exist $2 \times 2$ congestion
games where MWU with large enough $\eps$ can lead to limit cycles or
chaos. We will establish the emergence of chaos for all $2 \times 2$
congestion games for large enough learning rate $\eps$.

\section{Dynamical model}\label{s:model}

Let us plug into~\eqref{mwu} the values of the cost functions
from~\eqref{cost}:
\begin{equation}\label{mwu1}
\begin{aligned}
x_{n+1}&=\frac{x_n(1-\eps)^{\alpha(1+y_n)}}{x_n(1-\eps)^{\alpha(1+y_n)}+
(1-x_n)(1-\eps)^{\beta(2-y_n)}}\\
&=\frac{x_n}{x_n+(1-x_n)(1-\eps)^{\beta(2-y_n)-\alpha(1+y_n)}},\\
y_{n+1}&=\frac{y_n(1-\eps)^{\alpha(1+x_n)}}{y_n(1-\eps)^{\alpha(1+x_n)}+
(1-y_n)(1-\eps)^{\beta(2-x_n)}}\\
&=\frac{y_n}{y_n+(1-y_n)(1-\eps)^{\beta(2-x_n)-\alpha(1+x_n)}}.
\end{aligned}
\end{equation}

We introduce new variables
\begin{equation}\label{var}
a=(\alpha+\beta)\ln\frac1{1-\eps},\ \ \ b=\frac{2\beta-\alpha}{\alpha+\beta}.
\end{equation}
Then formulas~\eqref{mwu1} become
\begin{equation}\label{map}
\begin{aligned}
x_{n+1}&=\frac{x_n}{x_n+(1-x_n)\exp(a(y_n-b))},\\
y_{n+1}&=\frac{y_n}{y_n+(1-y_n)\exp(a(x_n-b))}.
\end{aligned}
\end{equation}

Clearly, if $(x_n,y_n)\in[0,1]^2$ then also
$(x_{n+1},y_{n+1})\in[0,1]^2$. Therefore we will be studying the
family of maps $F_{a,b}:[0,1]^2\to[0,1]^2$, given by
\begin{equation}\label{map2d}
F_{a,b}(x,y)=\left(\frac{x}{x+(1-x)\exp(a(y-b))},
\frac{y}{y+(1-y)\exp(a(x-b))}\right).
\end{equation}

The diagonal $x=y$ is invariant for $F_{a,b}$, so we can restrict this
map to the diagonal and we get the family of maps
$f_{a,b}:[0,1]\to[0,1]$, given by
\begin{equation}\label{map1d}
f_{a,b}(x)=\frac{x}{x+(1-x)\exp(a(x-b))}.
\end{equation}
In the context of game theory, this restriction means that both players start
with the same mixed strategy (the same probability distributions).

Our aim is to investigate the long-term behavior of the orbits of
$F_{a,b}$ and $f_{a,b}$. In particular, we can consider periodic
orbits, their stability, or chaos. When speaking of chaos, we will use
its most popular kind, \emph{Li-Yorke chaos}. Namely, if $X$ is a compact
space with metric $\rho$ and $f:X\to X$ is a continuous map, we say
that the pair of points $x,y\in X$ is a \emph{Li-Yorke pair} if
\begin{align*}
\liminf_{n\to\infty}\rho(f^n(x),f^n(y))&=0,\\
\limsup_{n\to\infty}\rho(f^n(x),f^n(y))&>0.
\end{align*}
The map $f$ is \emph{Li-Yorke chaotic} if there is an uncountable set
$S\subset X$ (called \emph{scrambled set}) such that every pair
$(x,y)$ with $x,y\in S$ and $x\neq y$ is a Li-Yorke pair.

Before starting detailed investigation of the families of maps
$F_{a,b}$ and $f_{a,b}$, let us determine what we should assume about
$a$ and $b$. By~\eqref{var}, $a>0$ and as we mentioned, we will be
interested most at large values of $a$. While~\eqref{var} does not
provide any restrictions for $b$, let us think what happens if $b<0$.

Then $F_{a,b}$ has four fixed points: $(0,0), (0,1), (1,0), (1,1)$.
Moreover, $x_{n+1}<x_n$ unless $x_n=0$ or $x_n=1$. Similarly,
$y_{n+1}<y_n$ unless $y_n=0$ or $y_n=1$. Therefore, trajectories of
all points of $(0,1)^2$ converge to $(0,0)$. Similarly, if $b>1$ then
trajectories of all points of $(0,1)^2$ converge to $(1,1)$. This
dynamics is not interesting. If $b=0$ or $b=1$, there are additionally
segments of fixed points on the boundary of the square, but still the
dynamics is not worth studying. Therefore, we will assume that
$b\in(0,1)$.

\section{On the diagonal}\label{s:1d}

Let us start investigating the dynamics of the maps $f_{a,b}$, given
by~\eqref{map1d}, with $a>0$, $b\in (0,1)$ (see Figure~\ref{graph}).
This map has three fixed points: $0$, $b$ and $1$.

\begin{figure}
\begin{center}
\includegraphics[height=60truemm]{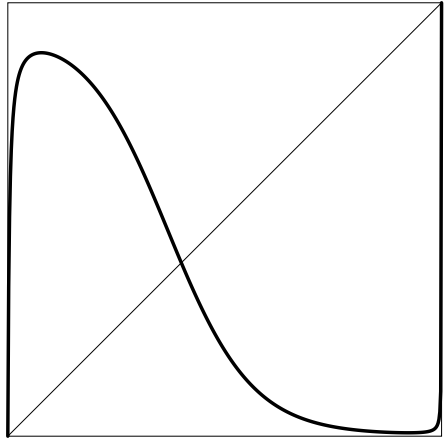}
\caption{The map $f_{a,b}$ with $a=14$, $b=0.4$.}\label{graph}
\end{center}
\end{figure}

The derivative of $f_{a,b}$ is given by
\begin{equation}\label{der}
f'_{a,b}(x)=\frac{(ax^2-ax+1)\exp(a(x-b))}
{\big(x+(1-x)\exp(a(x-b))\big)^2}.
\end{equation}
Thus,
\[
f'_{a,b}(0)=\exp(ab),\ \ f'_{a,b}(1)=\exp(a(1-b)),\ \ f'_{a,b}(b)=ab^2-ab+1.
\]
We see that the fixed points 0 and 1 are always repelling, while $b$
is repelling if $a>\frac2{b(1-b)}$.

The critical points of $f_{a,b}$ are solutions to $ax^2-ax+1=0$. Thus,
if $0<a\le 4$, then $f_{a,b}$ is strictly increasing. If $a>4$, it has
two critical points
\begin{equation}\label{crit}
c_l=\frac12-\sqrt{\frac14-\frac1a},\ \ \
c_r=\frac12+\sqrt{\frac14-\frac1a},
\end{equation}
so the map $f_{a,b}$ is bimodal.

Our family has some symmetry. Consider the flip $\phi:[0,1]\to[0,1]$,
$\phi(x)=1-x$. Then
\begin{align*}
\phi(f_{a,b}(x))&=1-\frac{x}{x+(1-x)\exp(a(x-b))}
=\frac{(1-x)\exp(a(x-b))}{x+(1-x)\exp(a(x-b))}\\
&=\frac{(1-x)}{(1-x)+x\exp(a((1-x)-(1-b)))}=f_{a,1-b}(\phi(x)).
\end{align*}
This can be written as
\begin{equation}\label{sym}
\phi\circ f_{a,b}=f_{a,1-b}\circ\phi,\ \ \textrm{where}\ \ \phi(x)=1-x.
\end{equation}

While $[0,1]$ is the natural space on which $f_{a,b}$ acts, it will be
sometimes easier to think of a smaller interval.

\begin{lemma}\label{l2}
For every $a>0$, $b\in (0,1)$ there exists a closed interval
$I_{a,b}\subset(0,1)$ such that $f_{a,b}(I_{a,b})\subset I_{a,b}$ and
$f_{a,b}$-trajectories of all points of $(0,1)$ enter $I_{a,b}$.
\end{lemma}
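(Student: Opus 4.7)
The plan is to build a closed subinterval $I_{a,b}=[\alpha,\beta]\subset(0,1)$ with $\alpha<b<\beta$, forward invariant under $f_{a,b}$, and then argue that every orbit starting in $(0,1)$ eventually enters it. Two features of $f_{a,b}$ drive the argument: the endpoints $0$ and $1$ are repelling, since $f'_{a,b}(0)=e^{ab}>1$ and $f'_{a,b}(1)=e^{a(1-b)}>1$, so orbits are kicked off the boundary; and the shape of $f_{a,b}$ is either monotone increasing (for $a\le 4$) or bimodal with critical points $c_l<c_r$ (for $a>4$), so the image of an interval can overshoot to the critical values.

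From the repulsion I extract numbers $\alpha_0\in(0,b)$ and $\beta_0\in(b,1)$ such that $f_{a,b}(x)>x$ on $(0,\alpha_0]$ and $f_{a,b}(x)<x$ on $[\beta_0,1)$. When $a\le 4$ the map is monotone, so $I_{a,b}:=[\alpha_0,\beta_0]$ is already forward invariant: $f_{a,b}([\alpha_0,\beta_0])=[f_{a,b}(\alpha_0),f_{a,b}(\beta_0)]\subset[\alpha_0,\beta_0]$.

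When $a>4$ the image of a large interval stretches out to the critical values $f_{a,b}(c_l)<1$ and $f_{a,b}(c_r)>0$, so I sharpen the choice: pick $0<\alpha\le\min\{\alpha_0,c_l,f_{a,b}(c_r)\}$ and $\max\{\beta_0,c_r,f_{a,b}(c_l)\}\le\beta<1$. Such $\alpha,\beta$ exist and automatically satisfy $\alpha<b<\beta$. Because $[c_l,c_r]\subset[\alpha,\beta]$, the image decomposes as
\[
f_{a,b}([\alpha,\beta])=\bigl[\min\{f_{a,b}(\alpha),f_{a,b}(c_r)\},\,\max\{f_{a,b}(c_l),f_{a,b}(\beta)\}\bigr],
\]
which is contained in $[\alpha,\beta]$ because $f_{a,b}(\alpha)>\alpha$, $f_{a,b}(c_r)\ge\alpha$, $f_{a,b}(c_l)\le\beta$, and $f_{a,b}(\beta)<\beta$. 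This establishes forward invariance in the bimodal case.

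For absorption, let $x\in(0,\alpha)$. Since $f_{a,b}(y)>y$ on $(0,\alpha]$, the orbit $f_{a,b}^n(x)$ strictly increases while it remains in $(0,\alpha]$; if it stayed there forever, its limit would be a fixed point in $(0,\alpha]\subset(0,b)$, but $0$ is the only fixed point of $f_{a,b}$ in $[0,b)$, so some iterate first crosses $\alpha$. Being $f_{a,b}$ of a point in $(0,\alpha]$ and with $f_{a,b}$ increasing on $[0,\alpha]$ (using $\alpha\le c_l$ in the bimodal case), that iterate is at most $f_{a,b}(\alpha)\le\beta$, hence lies in $(\alpha,\beta]\subset I_{a,b}$, and invariance traps it thereafter. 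A symmetric argument using the decrease of the orbit on $[\beta,1)$ together with $f_{a,b}\ge f_{a,b}(c_r)\ge\alpha$ on $[c_r,1]$ handles $x\in(\beta,1)$. The one delicate step is precisely this coordinated choice of $\alpha,\beta$ in the bimodal case: the repulsion constants $\alpha_0,\beta_0$ alone do not suffice, since the critical values $f_{a,b}(c_l),f_{a,b}(c_r)$ can throw the image outside; clamping $\alpha\le f_{a,b}(c_r)$ and $\beta\ge f_{a,b}(c_l)$ is exactly what is needed, after which invariance and absorption drop out.
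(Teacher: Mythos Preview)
Your argument is correct. You split into the monotone case $a\le 4$ and the bimodal case $a>4$, and in the latter you clamp the endpoints against the critical values $f_{a,b}(c_l)$ and $f_{a,b}(c_r)$ so that the image cannot spill over; the absorption step then uses the monotonicity of the orbit near each endpoint together with the same clamping to land safely inside $[\alpha,\beta]$.

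The paper takes a shorter, structure-free route. It only uses that $0$ and $1$ are repelling to get a $\delta_1>0$ with $f_{a,b}(x)>\lambda x$ for $x<\delta_1$ and the symmetric estimate near $1$, and then invokes compactness: since $f_{a,b}\big([\delta_1,1-\delta_1]\big)$ is a compact subset of $(0,1)$, it lies in some $[\delta_2,1-\delta_2]$, and $I_{a,b}=[\delta,1-\delta]$ with $\delta=\min(\delta_1,\delta_2)$ does the job. No case split, no critical points, no knowledge of the shape of $f_{a,b}$ beyond continuity and the repelling endpoints is needed. Your approach, by contrast, produces an interval whose endpoints are explicitly tied to the critical values, which is more informative but also more laborious; the paper's compactness trick is the standard shortcut for this kind of ``absorbing interval'' lemma and is worth internalizing.
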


\begin{proof}
Since $0$ and $1$ are repelling fixed points of $f_{a,b}$, there exist
$\delta_1>0$ and $\lambda>1$ such that if $x<\delta_1$ or $1-x<\delta_1$
then $\lambda x<f_{a,b}(x)<1-\lambda x$. We have
$f_a([\delta_1,1-\delta_1])\subset(0,1)$, so there exists $\delta_2>0$ such
that $f_a([\delta_1,1-\delta_1])\subset(\delta_2,1-\delta_2)$. Set
$\delta=\min(\delta_1,\delta_2)$ and $I_{a,b}=[\delta,1-\delta]$. Then $I_{a,b}$
is mapped by $f_{a,b}$ to itself, and all $f_{a,b}$-trajectories of
points from $(0,1)$ sooner or later enter $I_{a,b}$.
\end{proof}

Let us investigate regularity of $f_{a,b}$. It is clear that it is
analytic. However, nice properties of interval maps are guaranteed not
by analyticity, but by the negative Schwarzian derivative. Let us recall
that the Schwarzian derivative of $f$ is given by the formula
\[
Sf=\frac{f'''}{f'}-\frac32\left(\frac{f''}{f'}\right)^2.
\]
A ``metatheorem'' states that almost all natural noninvertible
interval map have negative Schwarzian derivative. Note that if $a\le
4$ then $f_{a,b}$ is a homeomorphism, so we should not expect negative
Schwarzian derivative for that case (and it would be useless, anyway).

\begin{proposition}\label{nS}
If $a>4$ then the map $f_{a,b}$ has negative Schwarzian derivative.
\end{proposition}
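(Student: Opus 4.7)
The plan is to write $f_{a,b}$ as $g \circ q$ with $g$ Möbius, reducing the Schwarzian computation to that of $q$, and then turn the resulting inequality into a single polynomial condition in $a$ and $t = x(1-x)$.

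I start by writing $f_{a,b}(x) = 1/(1+q(x))$ with $q(x) = \frac{1-x}{x}\exp(a(x-b))$. Since $u \mapsto 1/(1+u)$ is a Möbius transformation, its Schwarzian vanishes, so the composition rule for Schwarzian derivatives gives $Sf_{a,b} = Sq$. Setting $q = \exp h$ with $h(x) = \log(1-x) - \log x + a(x-b)$, a short direct computation yields the identity $S(\exp h) = Sh - \tfrac12(h')^2$. Hence the proposition is equivalent to
\[ 2h''' h' - 3(h'')^2 - (h')^4 < 0 \]
at every $x \in (0,1)$ with $h'(x) \neq 0$ (i.e., off the critical points $c_l, c_r$).

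Introducing $t = x(1-x)$, I compute $h'(x) = (at-1)/t$, $(h''(x))^2 = (1-4t)/t^4$, and $h'''(x) = -2(1-3t)/t^3$ (using $(1-x)^3 + x^3 = 1 - 3t$ and $(1-2x)^2 = 1 - 4t$). Multiplying the displayed inequality through by $t^4 > 0$, expanding, and collecting terms causes all contributions of degree $\le 1$ in $t$ to cancel; what survives is
\[ a^4 t^4 - 4a^3 t^3 + 6a(a-2)t^2 > 0. \]
Dividing by $at^2 > 0$ and substituting $\tau = at \in (0, a/4]$, the required bound takes the clean form
\[ a\bigl((\tau - 2)^2 + 2\bigr) > 12. \]

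It remains to verify this for every $a > 4$. If $a \ge 8$, then $\tau = 2$ lies in $(0, a/4]$, the minimum of $(\tau-2)^2 + 2$ over that interval is $2$, and $2a > 12$ since $a > 6$. If $4 < a < 8$, then $(0, a/4] \subset (0, 2)$ and $(\tau-2)^2 + 2$ is decreasing there, so its minimum is attained at $\tau = a/4$; the required bound becomes $g(a) := a^3/16 - a^2 + 6a - 12 > 0$ on $(4, 8)$. This follows from the factorization
\[ g(a) = \frac{(a-4)\bigl((a-6)^2 + 12\bigr)}{16}, \]
whose second factor is manifestly positive. The main obstacle is simply spotting the clean structure hiding behind the initially messy expansion; the pivotal algebraic identity is the double root at $s = -1$ in $s^4 + 4s + 3 = (s+1)^2(s^2 - 2s + 3)$ (with $s = at - 1$), which is what ultimately singles out $a = 4$ as the sharp threshold for negative Schwarzian.
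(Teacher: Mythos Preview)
Your proof is correct and follows essentially the same route as the paper's: reduce via a M\"obius composition to the Schwarzian of $\frac{1-x}{x}e^{a(x-b)}$, arrive at the identical polynomial condition $a^3t^2-4a^2t+6a-12>0$ with $t=x(1-x)\in(0,1/4]$, split into the cases $a\ge 8$ and $4<a<8$, and in the latter case use the factorization with factor $(a-4)$. The only cosmetic differences are that you compute the Schwarzian by hand through the identity $S(e^h)=Sh-\tfrac12(h')^2$ rather than quoting the final formula, and you rescale to $\tau=at$ before the case analysis.
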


\begin{proof}
Instead of making very complicated computations, we will use the
formula $S(h\circ f)=(f')^2((Sh)\circ f)+Sf$ and the fact that
M\"obius transformations have zero Schwarzian derivative. Set
$h(x)=\exp(ab)\frac{1-x}{x}$ and $g(x)=h\circ
f_{a,b}(x)=\frac{1-x}{x}\exp(ax)$; then $Sg=Sf$.

Now, simple computations yield
\[
Sg(x)=-\frac{a\big(-12+6a+4a^2(-1+x)x+a^3(-1+x)^2x^2\big)}
{2\big(1+a(-1+x)x\big)^2}.
\]
Thus, the Schwarzian derivative of $f_{a,b}$ is negative if and only
if
\[
-12+6a+4a^2x(x-1)+a^3x^2(1-x)^2>0.
\]
Set $t=x(1-x)$. Then the above inequality becomes
\begin{equation}\label{ineq}
P_a(t)>0,\ \ \ \textrm{where}\ \ \ P_a(t)=a^3t^2-4a^2t+6a-12.
\end{equation}
Clearly, $t\in [0,1/4]$.

The quadratic polynomial $P_a$ attains its minimum at $t=2/a$. If
$2/a\leq 1/4$, that is, if $a\geq 8$, we have $P_a(2/a)=2a-12>0$,
so~\eqref{ineq} holds. If $a\in (0,8)$, then~\eqref{ineq} holds
whenever $P_a(1/4)>0$. We have $P_a(1/4)=(a-4)(\frac{1}{16}a^2-\frac
34a+3)$. The second factor is always positive, and thus $P_a(1/4)>0$
for $a>4$.
\end{proof}

For maps with negative Schwarzian derivative each attracting or
neutral periodic orbit has a critical point in its immediate basin of
attraction. Thus, we know that if $a>4$ then $f_{a,b}$ can have at
most two attracting or neutral periodic orbits.

\subsection{Average behavior}

While we know that the fixed point $b$ is often repelling, especially
for large values of $a$, we can show that it is attracting in a
certain sense.

\begin{definition}\label{d:Cesaro}
For an interval map $f$ a point $p$ is \emph{Ces\`{a}ro attracting} if it
has a neighborhood $U$ such that for every $x\in U$ the averages
\[
\frac1n\sum_{k=0}^{n-1}f^k(x)
\]
converge to $p$.
\end{definition}

We will show that $b$ is globally Ces\`{a}ro attracting. Here by
``globally'' we mean that the set $U$ from the definition is the
interval $(0,1)$.

\begin{theorem}\label{t:Cesaro}
For every $a>0$, $b\in(0,1)$ and $x\in(0,1)$ we have
\begin{equation}\label{e:Cesaro}
\lim_{n\to\infty}\frac1n\sum_{k=0}^{n-1}f_{a,b}^k(x)=b.
\end{equation}
\end{theorem}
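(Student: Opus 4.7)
The plan is to change coordinates to the logit $u_n = \ln(x_n/(1-x_n))$, in which the multiplicative dynamics become additive, and then to read off the Ces\`{a}ro average by telescoping.

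First I would verify that the defining recursion \eqref{map1d} becomes, in logit coordinates,
\[
\frac{x_{n+1}}{1-x_{n+1}}=\frac{x_n}{(1-x_n)\exp(a(x_n-b))},
\]
so that $u_{n+1}-u_n=-a(x_n-b)$. Summing this identity from $k=0$ to $n-1$ telescopes to
\[
u_n-u_0=-a\sum_{k=0}^{n-1}(x_k-b),
\]
hence
\[
\frac1n\sum_{k=0}^{n-1}f_{a,b}^k(x)-b=\frac{u_0-u_n}{an}.
\]
Thus the theorem reduces to the single claim that $u_n/n\to0$.

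To establish this, I would invoke Lemma~\ref{l2}: for every $x\in(0,1)$ the trajectory eventually enters the forward-invariant compact interval $I_{a,b}=[\delta,1-\delta]\subset(0,1)$. On $I_{a,b}$ the logit $u=\ln(x/(1-x))$ is bounded by $\ln((1-\delta)/\delta)$, so $(u_n)$ is a bounded sequence. Therefore $(u_0-u_n)/(an)\to 0$, which gives \eqref{e:Cesaro}.

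There is essentially no hard step: the only thing that could conceivably go wrong is an orbit escaping to the repelling fixed points $0$ or $1$ so that $u_n$ blows up, and Lemma~\ref{l2} rules this out. The conceptual content is entirely in the observation that MWU, although nonlinear in $x$, is an affine recursion in the logit variable $u$, so that the sum $\sum(x_k-b)$ collapses to a boundary term. This gives a uniform (in $x$) control that is much stronger than $o(n)$, namely $O(1)$, of the deviation $\sum_{k=0}^{n-1}(x_k-b)$.
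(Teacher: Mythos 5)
Your proof is correct and is essentially the paper's own argument in different notation: your telescoped identity $u_n-u_0=-a\sum_{k=0}^{n-1}(x_k-b)$ is exactly the paper's inductive formula~\eqref{iter} rewritten in logit coordinates, and both arguments then conclude by using Lemma~\ref{l2} to bound the orbit (hence $u_n$) away from $0$ and $1$. The logit change of variables is a clean way to present it, but it is the same decomposition and the same key lemma.
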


\begin{proof}
By Lemma~\ref{l2} there is a closed interval $I_{a,b}\subset (0,1)$
which is invariant for $f_{a,b}$. Thus, there is $\delta\in(0,1)$ such
that $I_{a,b}\subset (\delta,1-\delta)$.

Fix $x=x_0\in[0,1]$ and use our notation $x_n=f_{a,b}^n(x_0)$. By induction,
we get
\begin{equation}\label{iter}
x_n=\frac{x}{x+(1-x)\exp\left(a\sum_{k=0}^{n-1}(x_k-b)\right)}.
\end{equation}

Assume that $x=x_0\in I_{a,b}$. Since $\delta<x_n<1-\delta$, we have
\[
\frac{x}{1-\delta}<x+(1-x)\exp\left(a\sum_{k=0}^{n-1}(x_k-b)\right)
<\frac{x}{\delta},
\]
so
\[
\delta^2<x\frac{\delta}{1-\delta}<(1-x)\exp\left(a\sum_{k=0}^{n-1}(x_k-b)
\right)<x\frac{1-\delta}{\delta}<\frac1\delta.
\]
Therefore
\begin{equation}\label{est1}
\delta^2<\exp\left(a\sum_{k=0}^{n-1}(x_k-b) \right)<\frac1{\delta^2},
\end{equation}
so
\[
\left|a\sum_{k=0}^{n-1}(x_k-b)\right|<2\log(1/\delta).
\]
This inequality can be rewritten as
\[
\left|\frac1n\sum_{k=0}^{n-1}x_k-b\right|<\frac{2\log(1/\delta)}{an},
\]
and~\eqref{e:Cesaro} follows.

If $x\in(0,1)\setminus I_{a,b}$, then by the definition of $I_{a,b}$
(Lemma~\ref{l2}) there is $n_0$ such that $f_{a,b}^{n_0}(x)\in
I_{a,b}$, so~\eqref{e:Cesaro} also holds.
\end{proof}

\begin{corollary}\label{cmper}
For every periodic orbit $\{x_0,x_1,\dots,x_{n-1}\}$
of $f_{a,b}$ in $(0,1)$ its center of mass
\[
\frac{x_0+x_1+\dots+x_{n-1}}n
\]
is equal to $b$.
\end{corollary}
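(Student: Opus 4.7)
The plan is to apply Theorem~\ref{t:Cesaro} directly at any point of the given periodic orbit, using the fact that along a periodic trajectory the Ces\`aro averages over blocks whose length is a multiple of the period reduce to a single fixed number, namely the arithmetic mean of the orbit.

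Concretely, I would fix the starting point $x = x_0$, which lies in $(0,1)$ by hypothesis, and note that periodicity gives $f_{a,b}^k(x_0) = x_{k \bmod n}$ for every $k \ge 0$. Then for $N = mn$ I would compute
\[
\frac{1}{mn}\sum_{k=0}^{mn-1} f_{a,b}^k(x_0)
= \frac{1}{mn}\cdot m\sum_{j=0}^{n-1} x_j
= \frac{x_0+x_1+\cdots+x_{n-1}}{n},
\]
which is independent of $m$. Letting $m \to \infty$ and applying Theorem~\ref{t:Cesaro} to $x_0 \in (0,1)$, the left-hand side converges to $b$, so the center of mass equals $b$.

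There is essentially no obstacle: the hypothesis that the orbit lies in $(0,1)$ is precisely what is needed to invoke Theorem~\ref{t:Cesaro}, and the subsequence $N = mn$ is enough because the full sequence of Ces\`aro averages is already known to converge, so any subsequential limit agrees with the full limit $b$.
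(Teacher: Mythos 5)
Your proof is correct and is exactly the intended deduction: the paper states Corollary~\ref{cmper} as an immediate consequence of Theorem~\ref{t:Cesaro} without further argument, and your computation of the Ces\`aro averages along the subsequence $N=mn$ is the standard way to make that implication explicit. Nothing is missing.
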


Applying the Birkhoff Ergodic Theorem, we get a stronger corollary.

\begin{corollary}\label{cmmeas}
For every probability measure $\mu$, invariant for $f_{a,b}$ and such
that $\mu(\{0,1\})=0$, we have
\[
\int_{[0,1]} x\; d\mu=b.
\]
\end{corollary}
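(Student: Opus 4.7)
The plan is to combine the pointwise statement of Theorem~\ref{t:Cesaro} with Birkhoff's Ergodic Theorem applied to the identity observable. First I would note that the identity function $\phi(x)=x$ on $[0,1]$ is bounded and measurable, hence lies in $L^1(\mu)$, so Birkhoff's theorem produces an $f_{a,b}$-invariant function $\tilde\phi\in L^1(\mu)$ satisfying
\[
\tilde\phi(x)=\lim_{n\to\infty}\frac1n\sum_{k=0}^{n-1}f_{a,b}^k(x)
\]
for $\mu$-almost every $x$, together with $\int\tilde\phi\,d\mu=\int x\,d\mu$.

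Next I would invoke the hypothesis $\mu(\{0,1\})=0$, which guarantees that $\mu$-almost every $x$ lies in $(0,1)$. For every such $x$, Theorem~\ref{t:Cesaro} identifies the above limit explicitly, giving
\[
\tilde\phi(x)=b\qquad\mu\text{-almost everywhere.}
\]
Integrating this equality against $\mu$ and using the conservation of integral from Birkhoff yields $\int x\,d\mu=\int\tilde\phi\,d\mu=b$, which is the claim.

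There is no real obstacle here; the work has already been done in Theorem~\ref{t:Cesaro}, and the role of Birkhoff is purely to convert the uniform pointwise convergence on $(0,1)$ into an integral identity. The only point that needs a brief sanity check is that the exceptional null set where Birkhoff's theorem could fail does not interfere: this is handled precisely because Theorem~\ref{t:Cesaro} holds for \emph{every} $x\in(0,1)$, not merely almost every, so the conclusion $\tilde\phi=b$ is valid on a set of full $\mu$-measure regardless of which Birkhoff null set one removes.
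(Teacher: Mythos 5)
Your argument is correct and is exactly the route the paper takes: the paper's entire ``proof'' is the sentence ``Applying the Birkhoff Ergodic Theorem, we get a stronger corollary,'' and your write-up simply fills in the standard details (identity observable, conservation of the integral, and the fact that the everywhere-convergence of Theorem~\ref{t:Cesaro} on $(0,1)$ identifies the Birkhoff limit as the constant $b$ on a set of full $\mu$-measure). No gaps.
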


Corollary~\ref{cmper} solves a known question whether there are
nontrivial smooth maps other than $x\mapsto axe^{-x}$ with centers of
mass of all periodic orbits coinciding~\cite{rt}. Here ``nontrivial'' means that
there are periodic orbits of infinitely many periods; we will show
later that for many parameters $f_{a,b}$ has this property.

\begin{problem}\label{findall}
Find all nontrivial smooth (analytic) maps for which centers of mass
of all periodic orbits coincide.
\end{problem}

\subsection{Symmetric case}
Now we explore what happens as we fix $b$ and let $a$ go to
infinity. First we study the symmetric case, $b=1/2$, which
corresponds to equal coefficients of the cost functions,
$\alpha=\beta$. To simplify notation we denote $f_a=f_{a,1/2}$, so
\[
f_a(x)=\frac{x}{x+(1-x)\exp(a(x-1/2))}.
\]

The interpretation of the formula~\eqref{sym} is very simple in this
case: the maps $f_a$ and $\phi$ commute. Set $g_a=\phi\circ
f_a=f_a\circ\phi$. Since $\phi$ is an involution, we have $g_a^2=f_a^2$.

We show that the dynamics of $f_a$ is simple, no matter how large $a$
is.

\begin{theorem}\label{trajgf}
The $g_a$-trajectory of every point of $(0,1)$ converges to a fixed
point of $g_a$. The $f_a$-trajectory of every point of $(0,1)$
converges to a fixed point or a periodic orbit of period $2$ of $f_a$,
other than $0$ and $1$.
\end{theorem}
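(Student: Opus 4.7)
The plan is to prove the claim about $g_a$-trajectories first and deduce the statement for $f_a$ via the identities $g_a^{2k}=f_a^{2k}$ and $g_a^{2k+1}=\phi\circ f_a^{2k+1}$, both immediate from $g_a=\phi\circ f_a=f_a\circ\phi$ and $\phi^2=\mathrm{id}$. Once $g_a^n(x)\to p$ for some fixed point $p$ of $g_a$, these identities yield $f_a^{2k}(x)\to p$ and $f_a^{2k+1}(x)\to\phi(p)=1-p$, so the $f_a$-orbit tends either to the fixed point $p=1/2$ (when $p=\phi(p)$) or to the period-two orbit $\{p,1-p\}\subset(0,1)$.

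To analyse $g_a$ I would pass to the logit coordinate $u=h(x):=\log\bigl(x/(1-x)\bigr)$. A short computation from~\eqref{map1d} gives $h\circ f_a=F\circ h$ with the odd smooth map $F(u)=u-\tfrac{a}{2}\tanh(u/2)$ on $\R$, and since $\phi$ conjugates to $u\mapsto -u$, it conjugates $g_a$ to $H=-F$. A direct manipulation yields the identity
\[
F^2(u)-u=-\tfrac{a}{2}\bigl(\tanh(F(u)/2)+\tanh(u/2)\bigr),
\]
so the sign of $F^2(u)-u$ is opposite to that of $F(u)+u$, and the common fixed points of $H$ and $F^2$ are $0$ together with, for $a>8$, a symmetric pair $\pm u_+$ arising from $\tanh(u/2)=4u/a$.

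For $a\le 8$ I would show $|F(u)|<|u|$ for every $u\ne 0$, which reduces to $\tanh(u/2)<4u/a$ on $(0,\infty)$; this holds since $4u/a-\tanh(u/2)$ vanishes at $0$ and has derivative $4/a-\tfrac12\cosh^{-2}(u/2)$, which is non-negative at $0$ and strictly positive for $u>0$ whenever $a\le 8$. Then $|F^n(u)|$ is strictly decreasing with limit $L\ge 0$; any accumulation point of $F^n(u)$ lies in $\{\pm L\}$ and must be a fixed point of $F$, forcing $L=0$ and $g_a^n(x)\to 1/2$. For $a>8$ the sign identity yields $F^2(u)>u$ on $(0,u_+)$ and $F^2(u)<u$ on $(u_+,\infty)$; combined with $F((0,u_+))\subset(-u_+,0)$ (from monotonicity of $F$ on $[0,u_+]$) and the oddness of $F$, we get $F^2((0,u_+))\subset(0,u_+)$, so $F^{2n}$ increases monotonically on $(0,u_+)$ to $u_+$, and symmetrically on $(-u_+,0)$; a contraction $|F(u)|<|u|$ for $|u|>u_+$, from the same convexity argument, drives remaining orbits into $[-u_+,u_+]$ in finitely many steps. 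Transferring back via $H^n=(-1)^nF^n$ then yields $H^n(u)\to u_\infty\in\{0,\pm u_+\}$ and hence $g_a^n(x)\to h^{-1}(u_\infty)$.

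The main obstacle is upgrading these local monotonicity and trapping statements to genuinely global convergence for $a>8$, i.e.\ ruling out attracting periodic orbits of period $\ge 2$ that could capture some orbits before they reach the trap neighbourhoods. Here Proposition~\ref{nS} is the key ingredient: the Schwarzian derivative is invariant under the M\"obius conjugation $\phi$, so $g_a$ also has negative Schwarzian for $a>4$, and Singer's theorem then bounds the number of attracting or neutral periodic orbits of the bimodal $g_a$ by its two critical points. For $a>8$ the attracting fixed points $h^{-1}(\pm u_+)$ already absorb both critical points of $g_a$ (one each, by the symmetry $\phi\circ g_a=g_a\circ\phi$), leaving no room for a higher-period attractor and closing the argument.
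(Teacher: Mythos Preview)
Your logit conjugation is correct, the identity $F^2(u)-u=-\tfrac{a}{2}\bigl(\tanh(F(u)/2)+\tanh(u/2)\bigr)$ is right, and the case $a\le 8$ is fine. The deduction of the $f_a$ statement from the $g_a$ statement is also correct. But the $a>8$ argument has a genuine gap: the claim that $F$ is monotone on $[0,u_+]$, and hence that $F((0,u_+))\subset(-u_+,0)$, is false once $a$ is moderately large. The positive critical point of $F$ is $u_c=2\operatorname{arccosh}(\sqrt{a}/2)$, which grows like $\log a$, while $u_+$ (the positive root of $\tanh(u/2)=4u/a$) grows like $a/4$; for instance at $a=20$ one has $u_c\approx 2.89<u_+\approx 4.9$ and $F(u_c)\approx -6.06<-u_+$. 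So the interval $[-u_+,u_+]$ is not forward-invariant and your trapping argument collapses. The Singer appeal does not rescue it either: Singer only bounds the number of attracting or neutral cycles, it does not by itself force every orbit to converge to a fixed point, and you have not independently established that the critical orbits of $g_a$ converge (that claim is currently circular).

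What saves you is something you have already proved but not exploited. Your identity shows that every fixed point of $H^2=F^2$ is a fixed point of $H$; equivalently, on $(0,1)$ the fixed points of $g_a^2$ are exactly the fixed points of $g_a$. This is precisely the paper's key step. From here the paper argues: restrict to the symmetric invariant compact interval $I_a$ of Lemma~\ref{l2}; since $g_a|_{I_a}$ has no period-$2$ points, Sharkovsky's theorem gives that it has no periodic points of period $\ge 2$ at all, and then a classical result for continuous interval maps whose only periodic points are fixed (see~\cite{SKSF}) yields that every orbit in $I_a$ converges to a fixed point. Points outside $I_a$ enter it after finitely many iterates. Replacing your broken trapping and Singer steps by this Sharkovsky argument gives a complete proof, with no need for negative Schwarzian.
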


\begin{proof}
We want to find fixed points and points of period 2 of $f_a$ and
$g_a$. Clearly,
\[
f_a(0)=0,\ \ \ f_a(1)=1,\ \ \ g_a(0)=1,\ \ \ g_a(1)=0.
\]

By~\eqref{iter} we have
\[
f_a^2(x)=\frac{x}{x+(1-x)\exp(a(x+f_a(x)-1))},
\]
so the fixed points of $f_a^2$ are $0$, $1$ and the solutions to
$x+f_a(x)-1=0$, that is, to $g_a(x)=x$. Thus, the fixed points of
$g_a^2$ (which, as we noticed, is equal to $f_a^2$) are the fixed
points of $g_a$ and $0$ and $1$.

We can choose the invariant interval $I_a=I_{a,1/2}$ symmetric, so
that $\phi(I_a)=I_a$. Let us look at $G_a=g_a|_{I_a}:I_a\to I_a$. All
fixed points of $G_a^2$ are also fixed points of $G_a$, so $G_a$ has
no periodic points of period 2. By the Sharkovsky Theorem, $G_a$ has
no periodic points other than fixed points. For such maps it is known
(see, e.g.,~\cite{SKSF}) that the $\omega$-limit set of every
trajectory is a singleton of a fixed point, that is, every trajectory
converges to a fixed point. If $x\in(0,1)\setminus I_a$, then the
$g_a$-trajectory of $x$ after a finite time enters $I_a$, so
$g_a$-trajectories of all points of $(0,1)$ converge to a fixed point
of $g_a$ in $I_a$. Observe that a fixed point of $g_a$ can be a fixed
point of $f_a$ (other that 0, 1) or a periodic point of $f_a$ of
period 2. Thus the $f_a$-trajectory of every point of $(0,1)$
converges to a fixed point or a periodic orbit of period $2$ of $f_a$,
other than $0$ and $1$.
\end{proof}

Let us identify the possible limits from the above theorem.

\begin{theorem}\label{trajf}
If $0<a\le 8$ then $f_a$-trajectories of all points of $(0,1)$
converge to the fixed point $1/2$. If $a>8$ then $f_a$ has a periodic
attracting orbit $\{\sigma_a,1-\sigma_a\}$, where $0<\sigma_a<1/2$. This
orbit attracts trajectories of all points of $(0,1)$, except countably
many points, whose trajectories eventually fall into the repelling
fixed point $1/2$.
\end{theorem}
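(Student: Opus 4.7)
The plan is to combine Theorem~\ref{trajgf}, which has already reduced the possible limit sets of $f_a$-trajectories in $(0,1)$ to fixed points and $2$-periodic orbits, with a sharp count of the fixed points of $g_a$. Since $g_a(x) = 1 - f_a(x)$, fixed points of $g_a$ are exactly the zeros of
\[
\Psi_a(x) := f_a(x) + x - 1,
\]
and the symmetry $\phi\circ f_a = f_a\circ\phi$ yields $\Psi_a(1-x) = -\Psi_a(x)$. Thus $\Psi_a(1/2)=0$, and the remaining zeros come in symmetric pairs $\{\sigma,1-\sigma\}$, each of which is precisely a $2$-periodic orbit of $f_a$. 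Note also that $f'_a(1/2) = 1 - a/4$, so $1/2$ is attracting for $a<8$, neutral for $a=8$, and repelling for $a>8$.

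The key technical claim is that $\min_{[0,1]} f'_a = f'_a(1/2) = 1 - a/4$. For $a\leq 4$ the map $f_a$ is monotone and $f'_a \geq 0$, so there is nothing to check. For $a>4$, $f'_a$ vanishes at the critical points $c_l<1/2<c_r$, is positive on $[0,c_l)\cup(c_r,1]$, and is negative on $(c_l,c_r)$. On the latter interval Proposition~\ref{nS} together with the standard consequence of negative Schwarzian derivative forbids $|f'_a|$ from having a positive local minimum; combined with the symmetry $f'_a(1-x)=f'_a(x)$ and $|f'_a|(c_l)=|f'_a|(c_r)=0$, this forces $|f'_a|$ to be unimodal on $(c_l,c_r)$ with maximum at $1/2$. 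Hence $f'_a$ attains its global minimum $1-a/4$ at $1/2$.

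For $a\leq 8$, this bound gives $\Psi'_a = f'_a + 1 \geq 0$ on $[0,1]$, with equality only at the isolated point $1/2$ when $a=8$. Therefore $\Psi_a$ is strictly increasing and its unique zero in $(0,1)$ is $1/2$, so $f_a$ admits no $2$-periodic orbit. By Theorem~\ref{trajgf}, every orbit in $(0,1)$ must then converge to $1/2$.

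For $a>8$, the minimum $1-a/4<-1$ together with the unimodal shape of $f'_a|_{(c_l,c_r)}$ produces exactly two points $d_l\in(c_l,1/2)$ and $d_r = 1-d_l\in(1/2,c_r)$ where $f'_a = -1$. Thus $\Psi_a$ is strictly increasing on $[0,d_l]$, strictly decreasing on $[d_l,d_r]$, and strictly increasing on $[d_r,1]$. The data $\Psi_a(0)=-1$, $\Psi_a(1/2)=0$, $\Psi_a(1)=1$, together with $\Psi_a(1-x)=-\Psi_a(x)$, force $\Psi_a(d_l)>0>\Psi_a(d_r)$; counting sign changes on each monotone piece yields exactly three zeros of $\Psi_a$ in $[0,1]$, namely a unique $\sigma_a\in(0,d_l)$, the value $1/2$, and $1-\sigma_a\in(d_r,1)$. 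So $\{\sigma_a,1-\sigma_a\}$ is the unique $2$-periodic orbit of $f_a$, and the rest follows from Theorem~\ref{trajgf}: every orbit in $(0,1)$ either converges to $\{\sigma_a,1-\sigma_a\}$ or eventually lands on the repelling fixed point $1/2$, and the latter happens exactly on $\bigcup_{n\geq 0}f_a^{-n}(\{1/2\})$, a countable set. The main obstacle is the unimodality of $|f'_a|$ on $(c_l,c_r)$, where Proposition~\ref{nS} is essential; the rest is routine zero-counting.
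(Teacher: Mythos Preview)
Your proof is correct, but the route differs from the paper's. You work directly with $\Psi_a=f_a+\mathrm{id}-1$ and control its monotone pieces by bounding $f'_a$ via Proposition~\ref{nS}: the negative-Schwarzian ``no positive local minimum of $|f'|$'' principle, together with the symmetry $f'_a(1-x)=f'_a(x)$, forces $|f'_a|$ to be unimodal on $(c_l,c_r)$ with maximum $a/4-1$ at $1/2$, and the zero-counting follows. The paper instead rewrites the fixed-point equation $f_a(x)=1-x$ as $x^2=(1-x)^2\exp(a(x-1/2))$, takes square roots, and analyzes the simpler auxiliary function $\gamma_a(x)=x-(1-x)\exp\!\big((a/2)(x-1/2)\big)$ on $[0,1/2]$ by an elementary concavity argument ($\gamma_a''\le 0$ there for $a\ge 8$) together with the monotonicity of $\gamma_a$ in $a$. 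The paper's argument is self-contained calculus and does not need Proposition~\ref{nS}; yours is more conceptual, reuses existing machinery, and makes transparent why the threshold $a=8$ appears (it is exactly where $f'_a(1/2)=-1$). Both conclude via Theorem~\ref{trajgf}; for the countability of the exceptional set you implicitly use, as the paper does explicitly, that $f_a$ is real analytic so each $f_a^{-n}(\{1/2\})$ is finite.
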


\begin{proof}
Observe first that $1/2$ is a fixed point of both $f_a$ and $g_a$.
Then, let us look for the fixed points of $g_a$ in $[0,1/2]$. They are
the solutions of the equation $g_a(x)=x$, that is, $f_a(x)=1-x$, which
is equivalent to
\[
x^2=(1-x)^2\exp(a(x-1/2)).
\]
Since $x$, $1-x$ and $\exp(a(x-1/2))$ are non-negative, this equation
is equivalent to $\gamma_a(x)=0$, where
\[
\gamma_a(x)=x-(1-x)\exp((a/2)(x-1/2)).
\]
We have
\[
\begin{aligned}
\gamma_a'(x)&=1+\big(1-(a/2)+(a/2)x\big)\exp\big((a/2)(x-1/2)\big),\\
\gamma_a''(x)&=(a/2)\big(2-(a/2)+(a/2)x\big)\exp\big((a/2)(x-1/2)\big).
\end{aligned}
\]
For all $a>0$ we have $\gamma_a(0)<0$ and $\gamma_a(1/2)=0$. If $a\ge
8$ then $\gamma_a''\le 0$ on $[0,1/2]$, so $\gamma_a$ is concave
there. We have $\gamma_8'(1/2)=0$, so since $\gamma_a$ is real
analytic, we have $\gamma_8(x)<0$ for all $x\in[0,1/2)$. For all
$a\in(0,8]$ and $x\in[0,1/2)$ we have $\gamma_a(x)\le\gamma_8(x)<0$,
so in this case $g_a$ has no fixed points in $[0,1/2)$. However, if
$a>8$ then $\gamma_a'(1/2)=2-a/4<0$, so $\gamma_a$ has exactly one
zero in $[0,1/2)$. Thus, in this case $g_a$ has exactly one fixed
point in $[0,1/2)$. We denote it by $\sigma_a$.

The situation for $x\in(1/2,1]$ is the same, because $\phi$ conjugates
$g_a$ with itself and maps $[0,1/2)$ to $(1/2,1]$. Observe that
$f_a(\sigma_a)=1-g_a(\sigma_a)=1-\sigma_a$ and similarly,
$f_a(1-\sigma_a)=\sigma_a$.

Now, if $0<a\le 8$, then $1/2$ is the only fixed point of $g_a$ in
$(0,1)$, so by Theorem~\ref{trajgf}, the $g_a$-trajectory of every
point of $(0,1)$ converges to $1/2$. Thus, the $f_a$-trajectory of
every point of $(0,1)$ also converges to $1/2$.

If $a>8$, then $|g_a'(1/2)|=|f_a'(1/2)|>1$ by~\eqref{der}, so the only
way a $g_a$-trajectory of $x$ can converge to $1/2$ is that
$g_a^n(x)=1/2$ for some $n$. Since the function $g_a$ is real
analytic, there are only countably many such points $x$. According to
Theorem~\ref{trajgf}, the $g_a$-trajectories of all other points of
$(0,1)$ converge to $\sigma_a$ or $1-\sigma_a$. Thus, the period 2 orbit
$\{\sigma_a,1-\sigma_a\}$ of $f_a$ attracts $f_a$-trajectories of all
points of $(0,1)$, except countably many points, whose trajectories
eventually fall to the repelling fixed point $1/2$.
\end{proof}

\subsection{Asymmetric case}

Now we proceed with the case when $b\neq 1/2$, that is, the
cost functions differ. As we noticed in Section~\ref{s:1d}, if $a\le
4$ then $f_{a,b}$ is strictly increasing and has three fixed points: 0
and 1 repelling and $b$ attracting. Therefore in this case
trajectories of all points of $(0,1)$ converge in a monotone way to
$b$.

We know that the fixed point $b$ is repelling if and only if
$a>\frac2{b(1-b)}$.

\begin{conjecture}\label{battr}
If $a\le\frac2{b(1-b)}$ then trajectories of all points of $(0,1)$
converge to $b$.
\end{conjecture}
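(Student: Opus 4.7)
The plan is to exploit the gradient--descent structure of the dynamics together with the negative--Schwarzian property established in Proposition~\ref{nS}. The logit change of variables $u=\log\bigl(x/(1-x)\bigr)$ conjugates $f_{a,b}$ on $(0,1)$ to the iteration
\[
u_{n+1}=u_n-aV'(u_n),\qquad V(u)=\log(1+e^u)-bu,
\]
which is precisely gradient descent with step size $a$ on the strictly convex, coercive potential $V$, whose unique minimum lies at $u^{\star}=\log\bigl(b/(1-b)\bigr)$. Since $V''(u^{\star})=b(1-b)$, the hypothesis $a\le 2/(b(1-b))$ is exactly the largest step size for which the linearization at the fixed point is non-expanding, which already explains why this threshold should be sharp.

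For $a\le 4$ there is nothing to do: $f_{a,b}$ is a monotone homeomorphism of $[0,1]$, orbits in $(0,1)$ are monotone, and since $0$ and $1$ are repelling they must converge to $b$. For $4<a\le 2/(b(1-b))$, the first approach I would try is to prove $V(u_{n+1})\le V(u_n)$ along every nonstationary orbit; combined with the standard descent inequality this would force $V'(u_n)\to 0$ and hence $u_n\to u^{\star}$. The classical descent lemma yields this whenever $a\le 2/\sup V''=8$, automatically settling the symmetric case $b=1/2$ and the portion $a\le 8$ in general. In the remaining gap $8<a\le 2/(b(1-b))$, which occurs only for $b\neq 1/2$, the descent lemma is too crude and a finer argument is needed: $V''$ is concentrated in a bounded window around the origin and $V'$ is uniformly bounded, so I would attempt a two-stage analysis in which orbits starting far from $u^{\star}$ drift monotonically into a bounded neighborhood of it, and inside that neighborhood the non-expanding linearization is propagated by a non-linear perturbation estimate.

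In parallel, Proposition~\ref{nS} together with Singer's theorem implies that for $a>4$ the map $f_{a,b}$ has at most two attracting or neutral periodic orbits, each of which must capture the forward orbit of one of the critical points $c_l,c_r$; Corollary~\ref{cmper} then forces the center of mass of any such orbit to equal $b$, and the symmetry~\eqref{sym} lets us assume $b\le 1/2$. The main obstacle, as I see it, is to show that both critical orbits land in the basin of $b$ throughout the sharp range $4<a\le 2/(b(1-b))$: the descent estimate handles this easily up to $a=8$, but above that threshold the combination of an asymmetric potential with a step size that is globally too aggressive for monotone descent---yet locally contracting---seems to demand that the gradient-descent and negative-Schwarzian viewpoints be used in tandem, and it is precisely this coupling that makes the conjecture resist a purely elementary treatment.
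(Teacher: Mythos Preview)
The paper does not prove this statement: it is explicitly labeled a \emph{Conjecture} and left open, so there is no proof in the paper to compare against. What you have written is a research outline, not a proof, and you acknowledge this yourself.

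Your gradient-descent reformulation is correct and genuinely illuminating: the logit change of variables $u=\log\bigl(x/(1-x)\bigr)$ does conjugate $f_{a,b}$ to $u\mapsto u-aV'(u)$ with $V(u)=\log(1+e^u)-bu$, and since $\sup V''=1/4$ the classical descent lemma gives monotone decrease of $V$ along orbits for all $a\le 8$. This cleanly settles the conjecture in that range and in particular recovers the paper's Theorem~\ref{trajf} for $b=1/2$, where $2/(b(1-b))=8$ exactly. That is already more than the paper claims.

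The gap is precisely the one you flag: for $b\neq 1/2$ the range $8<a\le 2/(b(1-b))$ is nonempty, the descent lemma fails globally, and neither of your two proposed attacks closes. The ``two-stage'' idea (drift from infinity, then local contraction near $u^\star$) is plausible but you give no mechanism preventing the orbit from overshooting the locally contracting window repeatedly. The Singer/negative-Schwarzian route is also inconclusive: knowing there are at most two attracting or neutral periodic orbits, each capturing a critical point, and that any such orbit has center of mass $b$ by Corollary~\ref{cmper}, does \emph{not} force that orbit to be the fixed point $\{b\}$---a nontrivial attracting cycle with barycenter $b$ is entirely consistent with those constraints. So the conjecture remains open after your proposal, exactly as it does in the paper.
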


This is a simple situation, so we turn to the case of large $a$. We
fix $b\in(0,1)\setminus\{1/2\}$ and let $a$ go to infinity. We will
show that if $a$ becomes sufficiently large (but how large, depends on
$b$), then $f_{a,b}$ is Li-Yorke chaotic and has periodic orbits of
all periods.

\begin{theorem}\label{per3}
If $b\in (0,1)\setminus\{1/2\}$, then there exists $a_b$ such that if
$a>a_b$ then $f_{a,b}$ has periodic orbit of period 3.
\end{theorem}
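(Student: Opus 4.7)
The plan is to reduce by the symmetry~\eqref{sym} to the case $b \in (1/2, 1)$ and then produce a period-$3$ point as a fixed point of $f_{a,b}^{3}$ in a small interval attached to $b$ from the right. The symmetry~\eqref{sym} says $\phi$ conjugates $f_{a,b}$ with $f_{a,1-b}$, so period $3$ exists for one iff it does for the other, and I may assume $b > 1/2$. Write $f = f_{a,b}$, $\Phi = f^{3}$, $M = f(c_l)$, $m = f(c_r)$.

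I would first collect asymptotics for $a \to \infty$. From~\eqref{map1d} and~\eqref{crit} one checks $c_l \sim 1/a$, $1 - M \sim a\, e^{1-ab}$, and $m \sim a\, e^{-a(1-b)}$, all extremely small. The crucial computation is that
\[
f(M) \;=\; \frac{M}{M + (1-M)\,e^{a(M-b)}} \;\longrightarrow\; 1 \qquad \text{as } a \to \infty,
\]
because $(1-M)\,e^{a(M-b)} \sim a\, e^{1 + a(1-2b)}$, and the inequality $1 - 2b < 0$ sends this to zero. This is the one step where the assumption $b \neq 1/2$ is essential, which is consistent with Theorem~\ref{trajf} ruling out period $3$ at $b = 1/2$.

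Next I would introduce $\eta \in (b, c_r)$ as the unique preimage of $c_l$ under $f$ on the right half of the middle branch; this is well-defined for $a$ large since $m < c_l < b$, and linearizing $f$ at $b$ gives $\eta - b \sim 1/(a(1-b))$, so $\eta \downarrow b$. Evaluating $\Phi$ on $[b, \eta]$: at the right endpoint $\Phi(\eta) = f(f(c_l)) = f(M) \to 1$, so $\Phi(\eta) > \eta$ for $a$ large; at $b$ one has $\Phi(b) = b$ and
\[
(\Phi - \operatorname{id})'(b) \;=\; \bigl(f'(b)\bigr)^{3} - 1 \;=\; -\bigl(ab(1-b) - 1\bigr)^{3} - 1 \;<\; 0,
\]
so $\Phi(x) < x$ for $x$ slightly greater than $b$. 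Applying the intermediate value theorem to $\Phi - \operatorname{id}$ on $[b, \eta]$ yields $x^{*} \in (b, \eta)$ with $f^{3}(x^{*}) = x^{*}$. Since $x^{*} \in (0,1) \setminus \{0, b, 1\}$, it is not a fixed point of $f$, and because $3$ is prime the $f$-period of $x^{*}$ is exactly $3$.

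The step most likely to cause trouble is the limit $f(M) \to 1$: one must track the competing exponential scales $a\, e^{1-ab}$ (the size of $1-M$) against $e^{a(1-b)}$ (the slope of $f$ near $1$), together with their algebraic prefactors, and verify that $b > 1/2$ makes the product in the denominator vanish. This is precisely where the asymmetry enters the argument.
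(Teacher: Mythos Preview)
Your argument is correct and yields a genuine period-$3$ orbit, but it proceeds quite differently from the paper's proof. The paper exploits the telescoping identity~\eqref{iter}, which gives the clean equivalences $f_{a,b}(x)>x\iff x<b$ and $f_{a,b}^3(x)<x\iff x_0+x_1+x_2>3b$; reducing to $b<1/2$, it fixes any $x\in(3b-1,b)$, observes $f_{a,b}(x)\to 1$ as $a\to\infty$ (so eventually $x+f_{a,b}(x)>3b$ regardless of $f_{a,b}^2(x)$), and then invokes the criterion of~\cite{LMPY} from the single inequality $f_{a,b}^3(x)<x<f_{a,b}(x)$. Your route instead tracks the critical values, establishes $f(M)\to 1$ from the asymptotics of $1-M$, and locates a fixed point of $f^3$ directly by the intermediate value theorem on $(b,\eta)$. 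The paper's path is shorter and needs control of only one iterate, at the price of citing~\cite{LMPY}; yours is self-contained but requires asymptotics for two iterates of the critical point.

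One small caveat: your estimate $\eta-b\sim 1/(a(1-b))$ from linearizing at $b$ is off, because at the scale $\eta-b=c/a$ one gets $f(\eta)\to b/(b+(1-b)e^{c})>0$, not $c_l\sim1/a$; the correct scale carries a logarithm, $\eta-b\sim(\log a)/a$. This does not damage the proof, since all you actually need is $\eta\to b$, and that follows without linearization: for any fixed $\zeta\in(b,1)$ one has $f(\zeta)\to 0$ exponentially while $c_l\sim 1/a$, so $f(\zeta)<c_l=f(\eta)$ and hence $\eta<\zeta$ for all large $a$.
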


\begin{proof}
Fix $a>0$ and $b,x\in(0,1)$ (we will vary $a$ later). As in the proof
of Theorem~\ref{t:Cesaro}, we set $x_n=f_{a,b}^n(x)$, and then the
formula~\eqref{iter} holds. Hence, $f_{a,b}(x)>x$ is equivalent to
$x<b$ and $f_{a,b}^3(x)<x$ is equivalent to $x+f_{a,b}(x)+f_{a,b}^2(x)
>3b$.

Assume that $0<b<1/2$. Then $3b-1<b$, so we can take $x>0$ such that
$3b-1<x<b$. Then $f_{a,b}(x)>x$. Moreover, $\exp(a(x-b))$ goes to 0 as
$a$ goes to infinity, so
\[
\lim_{a\to\infty}f_{a,b}(x)=\lim_{a\to\infty}
\frac{x}{x+(1-x)\exp(a(x-b))}=1.
\]
Thus, since $3b-x<1$, there exists $a_b$ such that if $a>a_b$ then
$f_{a,b}(x)>3b-x$, so $x+f_{a,b}(x)+f_{a,b}^2(x)>3b$ (note that $x$
depends only on $b$, not on $a$). Hence, if $a>a_b$ then
$f_{a,b}^3(x)<x$.

Now, because for $a>a_b$ there exist $x$ such that
$f_{a,b}^3(x)<x<f_{a,b}(x)$, from the theorem from~\cite{LMPY} it
follows that if $a>a_b$ then $f_{a,b}$ has a periodic point of period
3.

If $1/2<b<1$ then by~\eqref{sym} we can reduce it to the case
$0<b<1/2$.
\end{proof}

By the Sharkovsky Theorem (\cite{sha}, see also~\cite{LY}), existence
of a periodic orbit of period 3 implies existence of periodic orbits
of all periods, and by the result of~\cite{LY}, it implies that the
map is Li-Yorke chaotic.

Thus, we get the following corollary.

\begin{corollary}\label{chaos}
If $b\in (0,1)\setminus\{1/2\}$, then there exists $a_b$ such that if
$a>a_b$ then $f_{a,b}$ has periodic orbits of all periods and is
Li-Yorke chaotic.
\end{corollary}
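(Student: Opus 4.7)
The plan is straightforward: this corollary is an immediate consequence of Theorem~\ref{per3} together with two classical results on continuous interval maps, both of which are explicitly cited in the paragraph preceding the statement.

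First I would invoke Theorem~\ref{per3} to produce the threshold $a_b$ with the property that for every $a > a_b$ the map $f_{a,b}\colon [0,1]\to[0,1]$ has a periodic orbit of period~$3$. Since the map $f_{a,b}$ is given by an explicit analytic formula on the compact interval $[0,1]$, it is in particular continuous, which is all that is needed to apply the two theorems in the next steps.

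Next, I would appeal to the Sharkovsky Theorem~\cite{sha}: in the Sharkovsky ordering, $3$ is the maximal element, so any continuous self-map of a compact interval possessing a periodic point of period $3$ has periodic points of \emph{every} period. Applied to $f_{a,b}$ for $a > a_b$, this yields the first conclusion. Finally, I would invoke the Li--Yorke theorem from~\cite{LY}, according to which the existence of a periodic point of period $3$ for a continuous self-map of a compact interval forces the existence of an uncountable scrambled set, that is, Li--Yorke chaos. This yields the second conclusion.

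There is essentially no obstacle here: the entire dynamical content was produced by Theorem~\ref{per3}, and what remains is simply to quote the two classical theorems after verifying continuity, which is automatic from~\eqref{map1d}. If anything merits attention, it is only the bookkeeping observation that both Sharkovsky's theorem and the Li--Yorke theorem are statements about maps on a compact interval, and $f_{a,b}$ indeed preserves the compact interval $[0,1]$, so no restriction to the invariant subinterval $I_{a,b}$ of Lemma~\ref{l2} is required at this step.
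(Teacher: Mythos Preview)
Your proposal is correct and matches the paper's own argument exactly: the corollary is derived immediately from Theorem~\ref{per3} by citing the Sharkovsky Theorem~\cite{sha} for periodic orbits of all periods and the Li--Yorke result~\cite{LY} for chaos, with continuity of $f_{a,b}$ on $[0,1]$ being all that is needed.
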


This result has an interesting interpretation in the context of game
theory and learning in games. Parameter $a$ can be treated as measuring the
aggressiveness of a player. Corollary~\ref{chaos} implies that if
players have different cost functions (but they do not differ too much,
that is, $\alpha<2\beta$ and $\beta<2\alpha$ in \eqref{cost}),
their behavior will be chaotic if only they are aggressive enough.
Moreover, having in mind the connection between this parameter and
learning rate $\eps$, see~\eqref{var}, we can interpret
Corollary~\ref{chaos} as stating that if players learn fast enough ($\eps$ is
close to 1) then the system may become chaotic.

\section{Off the diagonal}\label{s:2d}

The natural question which arises is whether the chaotic behavior emerges
only if both players share the same initial strategy, or chaos can
happen also when their initial strategy profiles are different. To
answer this question, we study the
family of maps $F_{a,b}\colon [0,1]^2\to [0,1]^2$ defined
by~\eqref{map2d}, with $a>0$ and $b\in (0,1)$.

The map $F_{a,b}$ has five fixed points: $(0,0)$, $(1,1)$, $(0,1)$, $(1,0)$ and
$(b,b)$. Three of them, namely $(0,1)$,
$(1,0)$ and $(b,b)$, are Nash equilibria of the congestion game. That
is, no agent can strictly decrease their cost by unilateral deviations
to another strategy.

Indeed, in the case of the $(b,b)$ randomized strategy profile, the
expected cost of both agents (in the game defined by (\ref{cost})) is
equal to $3\alpha\beta/(\alpha+\beta)$. Moreover, if the first
(second) agent were to deviate and choose a different strategy, his
expected cost would still remain
unchanged. This means that $(b,b)$ is a Nash equilibrium of the game. By our
assumption that $0<b<1$ we derive that $\alpha < 2 \beta$ and $\beta <
2 \alpha$. This implies that $(0,0)$ and $(1,1)$ are not Nash
equilibria, as each agent would strictly prefer to choose distinct
strategies than use the same strategy as the other agent. Applying the
same reasoning, states $(0,1)$ and $(1,0)$ are Nash equilibria, since
in these states deviating to another strategy leads to sharing the
same strategy as your opponent.

The derivative of $F_{a,b}$ at $(0,1)$ and $(1,0)$ is
\[
\begin{aligned}
DF_{a,b}(0,1)&=\begin{pmatrix} \exp(-a(1-b)) & 0\\ 0 & \exp(-ab)
\end{pmatrix},\\
DF_{a,b}(1,0)&=\begin{pmatrix} \exp(-ab) & 0\\ 0 & \exp(-a(1-b))
\end{pmatrix},
\end{aligned}
\]
so those points are attracting.

To study the behavior of $F_{a,b}$ close to the other three fixed
points, and in general, close to the diagonal, we compute the
derivative of $F_{a,b}$ on the diagonal:
\begin{equation}\label{derdiag}
DF_{a,b}(x,x)=\frac{\exp(a(x-b))}{\big(x+(1-x)\exp(a(x-b))\big)^2}
\begin{pmatrix} 1 & -ax(1-x)\\ -ax(1-x) & 1 \end{pmatrix}.
\end{equation}
Thus, the eigenvectors of $DF_{a,b}$ at points $(x,x)$ on the diagonal
are $\begin{pmatrix} 1 \\ 1 \end{pmatrix}$ and $\begin{pmatrix} 1
  \\ -1 \end{pmatrix}$. The eigenvalue corresponding to the first one
is of course the derivative of $f_{a,b}$ at $x$, given by the
formula~\eqref{der}. The eigenvalue corresponding to the second
vector, perpendicular to the diagonal, is
\begin{equation}\label{derper}
\lambda_{a,b}(x)=\frac{(1+ax(1-x))\exp(a(x-b))}
{\big(x+(1-x)\exp(a(x-b))\big)^2}.
\end{equation}

We will show that in the long run the diagonal is exponentially
repelling. That is, the following theorem holds.

\begin{theorem}\label{diagrep}
For every $a>0$ and $b\in(0,1)$ there exist a positive integer $N$ and
a number $\kappa>1$ such that for every $x\in[0,1]$ we have
\begin{equation}\label{perp}
\prod_{k=0}^{N-1}\lambda_{a,b}(f^k_{a,b}(x))\ge\kappa.
\end{equation}
\end{theorem}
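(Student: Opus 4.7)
The plan is to reduce the theorem to an exact product formula obtained by telescoping. First I would verify the algebraic identity
\[
\lambda_{a,b}(x) = \bigl(1+ax(1-x)\bigr)\cdot\frac{f_{a,b}(x)\bigl(1-f_{a,b}(x)\bigr)}{x(1-x)},
\]
valid for $x\in(0,1)$; this is a short direct check using the relation $x+(1-x)\exp(a(x-b))=x/f_{a,b}(x)$ built into~\eqref{map1d} together with the consequent formula for $1-f_{a,b}(x)$. Setting $x_k=f_{a,b}^k(x)$ and taking the product of this identity for $k=0,\dots,N-1$, the second factor telescopes and gives, for every $x\in(0,1)$,
\[
\prod_{k=0}^{N-1}\lambda_{a,b}(x_k) = \frac{x_N(1-x_N)}{x(1-x)}\prod_{k=0}^{N-1}\bigl(1+ax_k(1-x_k)\bigr).
\]

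Next I would invoke Lemma~\ref{l2} to fix the invariant interval $I_{a,b}=[\delta,1-\delta]$. On $I_{a,b}$ every factor satisfies $1+ax_k(1-x_k)\ge \rho:=1+a\delta(1-\delta)>1$, and combined with $x_N(1-x_N)\ge\delta(1-\delta)$ and $x(1-x)\le 1/4$, the formula yields for $x\in I_{a,b}$ the bound
\[
\prod_{k=0}^{N-1}\lambda_{a,b}(x_k)\ge 4\delta(1-\delta)\rho^N,
\]
which dominates any $\kappa>1$ once $N$ is large enough. For $x\in(0,1)\setminus I_{a,b}$, Lemma~\ref{l2} guarantees that the orbit enters $I_{a,b}$ after finitely many steps, so the same exponential growth takes over from that moment; moreover, when $x$ is close to $\{0,1\}$ the ratio $x_N(1-x_N)/(x(1-x))$ is itself large, which compensates for the longer transient. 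For $x\in\{0,1\}$ the orbit is constant and the product collapses to $\exp(Nab)$ or $\exp(Na(1-b))$, which exceeds any $\kappa$ for $N$ large.

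The main obstacle is the uniformity in $x$: assembling these pointwise estimates into a single $N$ that works simultaneously for every $x\in[0,1]$. I would resolve it by noting that $\Lambda_N(x):=\prod_{k=0}^{N-1}\lambda_{a,b}(f_{a,b}^k(x))$ is continuous on $[0,1]$ (a finite product of compositions of continuous functions), so the explicit large values $\Lambda_N(0)=\exp(Nab)$ and $\Lambda_N(1)=\exp(Na(1-b))$ persist, by continuity, on some two-sided neighborhood of $\{0,1\}$; on the complementary compact subset of $(0,1)$ the entry time into $I_{a,b}$ is everywhere finite and upper semi-continuous, hence bounded, so the exponential estimate from the product formula applies with the same $N$. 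Choosing $N$ as the maximum of the values needed in the three regimes produces the desired uniform $\kappa>1$.
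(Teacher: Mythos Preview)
Your telescoping identity
\[
\lambda_{a,b}(x) = \bigl(1+ax(1-x)\bigr)\cdot\frac{f_{a,b}(x)\bigl(1-f_{a,b}(x)\bigr)}{x(1-x)}
\]
is correct and is in fact equivalent to the paper's decomposition $\lambda_{a,b}(x)=(1+ax(1-x))\exp(a(x-b))\bigl(f_{a,b}(x)/x\bigr)^2$: once you multiply over an orbit and substitute the closed form of $\exp\bigl(a\sum_k(x_k-b)\bigr)$ coming from~\eqref{iter}, the paper's product collapses to exactly your $\frac{x_N(1-x_N)}{x_0(1-x_0)}\prod_k(1+ax_k(1-x_k))$. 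So the core mechanism is the same, and your version of the identity is arguably tidier. Your bound on $I_{a,b}$ is also fine (with the slightly sharper constant $\rho=1+a\delta(1-\delta)$ versus the paper's $1+a\delta^2$).

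The gap is in your uniformity step. You argue: for a given $N$, continuity makes $\Lambda_N$ large on some neighbourhood of $\{0,1\}$; on the complementary compact set the entry time into $I_{a,b}$ is bounded; then take $N$ large enough for all three regimes. But the neighbourhood produced by continuity depends on the $N$ already chosen, so the ``complementary compact set'' and its entry-time bound $M$ depend on $N$; yet the exponential estimate on that set needs $N$ large relative to $M$. This is circular. Your side remark that ``the ratio $x_N(1-x_N)/(x(1-x))$ is itself large'' near $\{0,1\}$ does not rescue it either, because for $x$ sufficiently close to $0$ or $1$ the orbit may not have reached $I_{a,b}$ by time $N$, so $x_N(1-x_N)$ need not be bounded below.

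The paper avoids this by using a \emph{single-step} bound near the endpoints: since $\lambda_{a,b}(0)=\exp(ab)>1$ and $\lambda_{a,b}(1)=\exp(a(1-b))>1$, one can shrink $\delta$ so that simultaneously $(\delta,1-\delta)$ is forward invariant \emph{and} $\lambda_{a,b}(x)>\rho$ for every $x\in[0,\delta]\cup[1-\delta,1]$. Then for \emph{any} $x\in[0,1]$ each iterate outside $(\delta,1-\delta)$ already contributes a factor $>\rho$, while the iterates inside contribute via your product formula; combining gives $\Lambda_N(x)\ge 4\delta(1-\delta)\rho^N$ uniformly, with no circularity. Replacing your continuity argument by this observation completes the proof along your lines.
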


\begin{proof}
We can rewrite~\eqref{derper} as
\[
\lambda_{a,b}(x)=(1+ax(1-x))\exp(a(x-b))
\left(\frac{f_{a,b}(x)}x\right)^2.
\]

Take $\delta>0$ such that $I_{a,b}\subset (\delta,1-\delta)$. If
$\delta$ is sufficiently small, then the interval $(\delta,1-\delta)$
is invariant. Assume that $x\in(\delta,1-\delta)$. Then
$1+ax(1-x)>1+a\delta^2$. Therefore
\[
\lambda_{a,b}(x)>(1+a\delta^2)\exp(a(x-b))
\left(\frac{f_{a,b}(x)}x\right)^2.
\]
Taking the product over a piece of the trajectory of $x$, and using
notation $f^k_{a,b}(x)=x_k$, we get
\[
\prod_{k=0}^{n-1}\lambda_{a,b}(x_k)>(1+a\delta^2)^n
\exp\left(a\sum_{k=0}^{n-1}(x_k-b)\right)\left(\frac{x_n}{x_0}\right)^2.
\]
Using~\eqref{est1}, we get
\begin{equation}\label{exp}
\prod_{k=0}^{n-1}\lambda_{a,b}(x_k)>\delta^4(1+a\delta^2)^n.
\end{equation}

We have $\lambda_{a,b}(0)=\exp(ab)>1$ and $\lambda_{a,b}(1)=
\exp(a(1-b))>1$. Therefore, if $\delta>0$ is sufficiently small then
$\lambda(x)>1+a\delta^2$ whenever $x\le\delta$ or  $x\ge 1-\delta$.
Clearly, in~\eqref{exp} we can take arbitrarily small $\delta>0$, and
the estimate holds whenever $x\in(\delta,1-\delta)$. This means
that~\eqref{exp} holds for all $x\in[0,1]$.

Now we take $N$ such large that $\delta^4(1+a\delta^2)^N>1$ and set
$\kappa=\delta^4(1+a\delta^2)^N$. Then~\eqref{perp} holds.
\end{proof}

\begin{corollary}\label{diagrep1}
There is a neighborhood $U$ of the diagonal such that if $(x,y)\in U$
then the distance of $F_{a,b}^N(x,y)$ from the diagonal is larger than
the distance of $(x,y)$ from the diagonal.
\end{corollary}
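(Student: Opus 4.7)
The plan is to combine Theorem~\ref{diagrep} with a transverse linearization of $F^N_{a,b}$ along the diagonal, and then extend the resulting pointwise expansion estimate from the diagonal itself to a tubular neighborhood by a compactness argument.

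First, I would compute $DF^N_{a,b}(x,x)$ via the chain rule as the product of the Jacobians $DF_{a,b}(f^k_{a,b}(x),f^k_{a,b}(x))$ for $k=0,1,\dots,N-1$. By formula~\eqref{derdiag}, each factor is a scalar multiple of a symmetric matrix with equal diagonal entries, and therefore has eigenvectors $(1,1)$ and $(1,-1)$. All these matrices commute, so their product has the same two eigenvectors, and the eigenvalue of $DF^N_{a,b}(x,x)$ on $(1,-1)$ equals $\prod_{k=0}^{N-1}\lambda_{a,b}(f^k_{a,b}(x))\ge\kappa>1$ by Theorem~\ref{diagrep}.

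Next, let $\pi(u,v)=v-u$, so that $|\pi(u,v)|/\sqrt{2}$ is the Euclidean distance from $(u,v)$ to the diagonal. Define $\Phi(x,h)=\pi(F^N_{a,b}(x,x+h))$ on the compact set $\{(x,h):x\in[0,1],\,x+h\in[0,1]\}$. Invariance of the diagonal gives $\Phi(x,0)\equiv0$, so by Hadamard's lemma we can write $\Phi(x,h)=h\,\Psi(x,h)$ for a continuous function $\Psi$ with $\Psi(x,0)=\frac{\partial\Phi}{\partial h}(x,0)$ equal to the transverse eigenvalue just computed. By continuity and compactness, choose $\kappa'\in(1,\kappa)$ and $\delta>0$ so that $\Psi(x,h)\ge\kappa'$ whenever $x\in[0,1]$, $x+h\in[0,1]$, and $|h|\le\delta$. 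Setting $U=\{(x,y)\in[0,1]^2:|y-x|<\delta\}$ and $h=y-x$, for any $(x,y)\in U$ off the diagonal we obtain $|\pi(F^N_{a,b}(x,y))|=|h|\,\Psi(x,h)\ge\kappa'|h|>|h|$, which is exactly the required strict inequality on distances to the diagonal.

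The only delicate point is obtaining this transverse expansion uniformly in the base point $x\in[0,1]$, so that a single neighborhood $U$ works along the whole diagonal; this is what forces the commutativity observation for the Jacobians in the first step, which allows the transverse eigenvalue of the product to factor cleanly as the product of the transverse eigenvalues and thus match the conclusion of Theorem~\ref{diagrep}. Beyond that, the compactness/continuity step is routine, and no estimate beyond Theorem~\ref{diagrep} is needed.
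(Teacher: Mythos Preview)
Your argument is correct. The paper itself states Corollary~\ref{diagrep1} without proof, treating it as an immediate consequence of Theorem~\ref{diagrep}; your write-up supplies precisely the details that are left implicit, namely that the Jacobians along the diagonal share the eigenbasis $(1,1),(1,-1)$ (hence commute and the transverse eigenvalue of $DF_{a,b}^N$ factors as in Theorem~\ref{diagrep}), followed by the routine Hadamard/compactness step to pass from the linearization to a uniform tubular neighborhood. One small remark: strictly speaking the inequality in the corollary is meant for points off the diagonal, which you handle correctly; also, your use of Hadamard's lemma at the corners $x=0,1$ is justified because the formula for $F_{a,b}$ extends analytically to a neighborhood of $[0,1]^2$, so $\Phi$ is $C^1$ up to the boundary.
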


Let us fix $a>0$ and $b\in(0,1)$. For simplicity, we will now write
$F$ for $F_{a,b}$ and $(\xhat,\yhat)$ for $F(x,y)$ (if we apply $F$
only once). If we iterate $F$, we will continue to write $(x_n,y_n)$
for $F^n(x,y)$.

Let us establish some simple properties of $F$. We will consider the
triangle below the diagonal in the square $[0,1]^2$. Switching $x$ and
$y$ will result in switching $\xhat$ and $\yhat$, so the corresponding
results for the triangle above the diagonal will be the same.

Consider the rectangle
\[
V=\{(x,y):0\le y<b<x\le 1\}
\]
(see Figure~\ref{regions}).

\begin{lemma}\label{quadrant}
If $(x,y)\in V$ then its trajectory converges to $(1,0)$.
\end{lemma}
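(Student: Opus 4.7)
My plan is to establish that $V$ is forward invariant under $F_{a,b}$, with the first coordinate nondecreasing and the second coordinate nonincreasing along every trajectory that starts in $V$. Monotone convergence then delivers a limit point, which I will identify as a fixed point of $F_{a,b}$ and then pin down from the short list of fixed points to be $(1,0)$.

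The first step is a direct sign analysis of~\eqref{map2d}. If $(x,y)\in V$, then $y-b<0$, so $\exp(a(y-b))<1$, which gives $\xhat>x$ when $x\in(b,1)$ and $\xhat=1$ when $x=1$. Symmetrically, $x-b>0$ yields $\exp(a(x-b))>1$ and $\yhat<y$ when $y\in(0,b)$, with $\yhat=0$ when $y=0$. In either case $\xhat\ge x>b$ and $\yhat\le y<b$, so $(\xhat,\yhat)\in V$; thus $V$ is forward invariant, and by induction $(x_n)$ is nondecreasing and bounded above by $1$, while $(y_n)$ is nonincreasing and bounded below by $0$, so both sequences converge.

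For the second step, set $x^*=\lim_n x_n$ and $y^*=\lim_n y_n$. Monotonicity gives $x^*\ge x_0>b$ and $y^*\le y_0<b$, and by continuity of $F_{a,b}$ the pair $(x^*,y^*)$ is a fixed point. Of the five fixed points $(0,0)$, $(1,1)$, $(0,1)$, $(1,0)$, $(b,b)$ listed at the start of Section~\ref{s:2d}, only $(1,0)$ is compatible with $x^*>b$ and $y^*<b$, hence $(x_n,y_n)\to(1,0)$.

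I do not foresee a genuine obstacle: the whole argument is monotone convergence coupled to a sign check on two scalar quantities. The only moment that requires care is keeping the strict inequalities $x^*>b$ and $y^*<b$, which follow directly from the strict inequalities $x_0>b$ and $y_0<b$ at the starting point, even in the degenerate cases $x_0=1$ or $y_0=0$ where the respective coordinate sequence is constant.
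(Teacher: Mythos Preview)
Your argument is correct and follows the same route as the paper: a sign check on~\eqref{map2d} shows $V$ is forward invariant with $(x_n)$ nondecreasing and $(y_n)$ nonincreasing, and the monotone limits form a fixed point that can only be $(1,0)$. You are in fact slightly more careful than the paper in handling the boundary cases $x_0=1$ and $y_0=0$, but the idea is identical.
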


\begin{proof}
{}From the formula for $F$ (equation~\eqref{map2d}) it follows that if
$(x,y)\in V$ then $\xhat>x$ and $\yhat<y$. In particular, also
$(\xhat,\yhat)\in V$. Therefore the sequence $(x_n)$ increases, while
the sequence $(y_n)$ decreases. The only possible limits for those
sequences are 1 and 0 respectively.
\end{proof}

Elementary calculations result in the next property.

\begin{lemma}\label{triangle}
The map $F$ preserves the triangle below the diagonal. That is, if
$x>y$ then $\xhat>\yhat$.
\end{lemma}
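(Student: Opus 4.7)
The plan is to reduce everything to the ``odds'' transformation $t\mapsto t/(1-t)$, which turns $F$ into a multiplicative (hence manageable) map. Directly from~\eqref{map2d} one reads off
\[
\frac{\xhat}{1-\xhat}=\frac{x}{1-x}\,\exp(a(b-y)),\qquad
\frac{\yhat}{1-\yhat}=\frac{y}{1-y}\,\exp(a(b-x)),
\]
so dividing the two identities gives
\[
\frac{\xhat/(1-\xhat)}{\yhat/(1-\yhat)}
=\frac{x(1-y)}{y(1-x)}\,\exp(a(x-y)).
\]

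Assuming first that $(x,y)\in(0,1)^2$ with $x>y$, the factor $\exp(a(x-y))$ is strictly greater than $1$, and since $t\mapsto t/(1-t)$ is strictly increasing on $[0,1)$ the ratio $x(1-y)/(y(1-x))$ is also strictly greater than $1$. Hence the right-hand side exceeds $1$, so $\xhat/(1-\xhat)>\yhat/(1-\yhat)$, and invoking monotonicity of $t\mapsto t/(1-t)$ once more yields $\xhat>\yhat$.

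The only thing left is to deal with the boundary possibilities $x=1$ or $y=0$ (the hypothesis $x>y$ excludes $x=0$ and $y=1$). These cases don't actually threaten the argument, they just lie outside the ``odds'' formulation: if $x=1$ then $\xhat=1$ and $\yhat<1$ because $(1-y)\exp(a(1-b))>0$; if $y=0$ then $\yhat=0$ and $\xhat>0$ because $x>0$. Both sub-cases give $\xhat>\yhat$ directly from~\eqref{map2d}.

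I don't anticipate a genuine obstacle here; the only mild care is to avoid dividing by $0$ or $1-t=0$ when writing the odds ratio, which is why the boundary cases are split off and handled by inspection. Everything else is a one-line monotonicity argument.
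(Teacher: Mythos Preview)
Your argument is correct. The paper itself offers no proof beyond the sentence ``Elementary calculations result in the next property,'' so your odds-ratio computation is precisely the kind of elementary calculation being alluded to; there is nothing to compare against, and your handling of the boundary cases $x=1$ and $y=0$ is appropriate.
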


For small $\delta>0$ we set
\[
T_\delta=\{(x,y):b\le y<x\le1-\delta\}\cup\{(x,y):\delta\le y<x\le
b\}
\]
(see Figure~\ref{regions}).

\begin{figure}
\begin{center}
\includegraphics[height=60truemm]{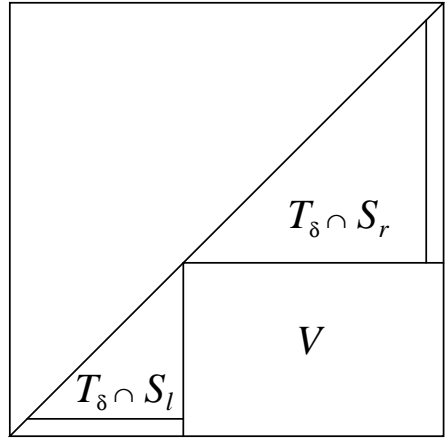}
\caption{Regions $V$, $T_\delta$, $S_r$ and $S_\ell$.}\label{regions}
\end{center}
\end{figure}

\begin{lemma}\label{invreg}
If $\delta>0$ is sufficiently small then $F(T_\delta)\subset
T_\delta\cup V$.
\end{lemma}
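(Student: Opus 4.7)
The plan is to use Lemma~\ref{triangle}, which says $F$ preserves the open lower triangle $\{y<x\}$, so I need only exclude that a point of $T_\delta$ is sent into one of the two forbidden corners of the lower triangle: the bottom corner $A_\delta=\{(u,v):0\le v<\delta,\ v<u\le b\}$ and the right corner $B_\delta=\{(u,v):b\le v<u,\ 1-\delta<u\le 1\}$. I would split $T_\delta=T_\delta^+\cup T_\delta^-$ into its upper piece $T_\delta^+=\{b\le y<x\le 1-\delta\}$ and its lower piece $T_\delta^-=\{\delta\le y<x\le b\}$, and check each piece against each corner separately.

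For $T_\delta^+$, the condition $y\ge b$ forces $\exp(a(y-b))\ge 1$, so the first coordinate of~\eqref{map2d} immediately gives $\xhat\le x\le 1-\delta$, which rules out $B_\delta$. To rule out $A_\delta$ I need a uniform positive lower bound on $\yhat=y/(y+(1-y)\exp(a(x-b)))$. A one-line monotonicity check shows $\yhat$ is increasing in $y$ and decreasing in $x$ on the relevant domain, so its infimum over $T_\delta^+$ is attained at the corner $(x,y)=(1-\delta,b)$:
\[
\yhat\ge\frac{b}{b+(1-b)\exp(a(1-\delta-b))}.
\]
As $\delta\to 0^+$ this tends to $m:=b/(b+(1-b)\exp(a(1-b)))>0$, so any $\delta<m/2$ forces $\yhat>\delta$.

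For $T_\delta^-$ the dual argument applies: $x\le b$ gives $\exp(a(x-b))\le 1$, hence $\yhat\ge y\ge\delta$, which rules out $A_\delta$. For $B_\delta$, monotonicity of $\xhat=x/(x+(1-x)\exp(a(y-b)))$ (increasing in $x$, decreasing in $y$) places its maximum over $T_\delta^-$ at $(x,y)=(b,\delta)$:
\[
\xhat\le\frac{b}{b+(1-b)\exp(a(\delta-b))}.
\]
This tends to $b/(b+(1-b)\exp(-ab))<1$ as $\delta\to 0^+$, so for all sufficiently small $\delta$ it is at most $1-\delta$ and $B_\delta$ is excluded. The estimates are elementary once one trusts the monotonicity claims; the only real bookkeeping is identifying which corner of each subregion realizes the worst case, and then choosing $\delta$ small enough that both the $T_\delta^+$ and $T_\delta^-$ bounds kick in simultaneously.
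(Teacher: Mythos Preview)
Your proof is correct and follows essentially the same strategy as the paper: split $T_\delta$ into its upper and lower triangles, use $y\ge b$ (resp.\ $x\le b$) to get $\xhat\le x\le 1-\delta$ (resp.\ $\yhat\ge y\ge\delta$) directly, and then obtain the remaining bound on the other coordinate. The only difference is that where you use explicit monotonicity to locate the worst-case corner and compute a concrete bound, the paper argues abstractly that $F(S_r)$ is compact and disjoint from $\{y=0\}$ (and symmetrically for $S_\ell$) to extract the constants $\delta_r,\delta_\ell$; this is a stylistic rather than a substantive distinction.
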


\begin{proof}
Set
\[
S_r=\{(x,y):b\le y\le x\le 1\},\ \ \ S_\ell=\{(x,y):0\le y\le x\le b\}.
\]
If $(x,y)\in S_r$ then
\[
\xhat=\frac{x}{x+(1-x)\exp(a(y-b))}\le\frac{x}{x+(1-x)}=x.
\]
Similarly, $\yhat\le y$. The set $F(S_r)$ is a compact subset of the
triangle $\{(x,y):x\ge y\}$, disjoint from the line $y=0$ (because if
$\yhat=0$ then $y=0$). Thus, there is $\delta_r>0$ such that if
$(x,y)\in S_r$ then $\yhat>\delta_r$. By similar arguments, there is
$\delta_\ell>0$ such that if $(x,y)\in S_\ell$ then
$\xhat<1-\delta_\ell$.

Take $\delta\in(0,\min(\delta_r,\delta_\ell))$ and $(x,y)\in
T_\delta\cap S_r$. Then $\xhat\le x\le 1-\delta$. Since $x>y$, by
Lemma~\ref{triangle} we have $\xhat>\yhat$. Moreover,
$\yhat>\delta_r>\delta$. Therefore, $(\xhat,\yhat)\in T_\delta\cup V$.
Similarly, if $(x,y)\in T_\delta\cap S_\ell$ then $(\xhat,\yhat)\in
T_\delta\cup V$. Since $T_\delta\subset S_r\cup S_\ell$, this
completes the proof.
\end{proof}

Similarly as in the one-dimensional case, we get by induction the
formulas
\[
\begin{aligned}
x_n&=\frac{x}{x+(1-x)\exp\left(a\sum_{k=0}^{n-1}(y_k-b)\right)},\\
y_n&=\frac{y}{y+(1-y)\exp\left(a\sum_{k=0}^{n-1}(x_k-b)\right)}.
\end{aligned}
\]

Now we can prove the main theorem of this section.

\begin{theorem}\label{conv2d}
If $0\le y<x\le 1$ then the trajectory of $(x,y)$ converges to
$(1,0)$.
\end{theorem}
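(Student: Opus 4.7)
The plan is to funnel the trajectory into the quadrant $V$ from Lemma~\ref{quadrant} and then invoke that lemma. First I dispose of the two boundary strata $y=0$ and $x=1$. On $y=0$ the iteration reduces to $x_{n+1}=x_n/(x_n+(1-x_n)\exp(-ab))>x_n$, so $(x_n,0)$ climbs monotonically to $(1,0)$; the case $x=1$ is symmetric. I may therefore assume $0<y<x<1$.

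For such $(x,y)$, I fix $\delta>0$ small enough that Lemma~\ref{invreg} applies and $(x,y)\in V\cup T_\delta$. By Lemma~\ref{triangle} the orbit stays strictly below the diagonal, and iterating Lemma~\ref{invreg} it remains in $V\cup T_\delta$ for every $n$. If it ever enters $V$, Lemma~\ref{quadrant} finishes. The remaining task is to rule out the possibility that $(x_n,y_n)\in T_\delta$ for all $n$.

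Assume so. Then $\delta\le x_n,y_n\le 1-\delta$, and the two induction formulas displayed just before the theorem let me re-run the argument of Theorem~\ref{t:Cesaro} coordinate-by-coordinate: both $|a\sum_{k=0}^{n-1}(x_k-b)|$ and $|a\sum_{k=0}^{n-1}(y_k-b)|$ stay bounded by $2\log(1/\delta)$ uniformly in $n$. Subtracting and using $x_k-y_k>0$, the positive series $\sum_{k}(x_k-y_k)$ converges, so $v_n:=(x_n-y_n)/2\to 0$; the orbit clings to the diagonal.

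To close the argument, I would contradict $v_n\to 0$ by transverse expansion. From~\eqref{derdiag} and the chain rule, $DF_{a,b}^N(u,u)$ has eigenvector $(1,-1)$ with eigenvalue $\Lambda_N(u):=\prod_{k=0}^{N-1}\lambda_{a,b}(f_{a,b}^k(u))$, and Theorem~\ref{diagrep} produces $N$ and $\kappa>1$ with $\Lambda_N(u)\ge\kappa$ uniformly in $u$. A uniform second-order Taylor expansion of the smooth $F_{a,b}^N$ about $(u_n,u_n)$ on the compact square gives $v_{n+N}\ge(\kappa-Cv_n)v_n$; since $v_n\to 0$, this factor exceeds $(\kappa+1)/2>1$ for all sufficiently large $n$, so iterating yields $v_{n+mN}\to\infty$ (note $v_n>0$, else $(x_n,y_n)$ would lie on the diagonal, outside $T_\delta$), a contradiction. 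The main obstacle is exactly this last step, transferring the one-dimensional repulsion of Theorem~\ref{diagrep} to the two-dimensional orbit via a uniform Taylor bound; it becomes workable only because the preceding Ces\`{a}ro-type estimate has already forced the orbit into the linear regime $v_n\to 0$.
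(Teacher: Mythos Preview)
Your proof is correct and follows essentially the same route as the paper: reduce to the case where the orbit stays in $T_\delta$, use the induction formulas and the Ces\`aro-type bound to force $x_n-y_n\to 0$, and then contradict transverse repulsion from the diagonal. The only cosmetic difference is that the paper packages your final Taylor-expansion step as Corollary~\ref{diagrep1} and simply cites it, whereas you spell out the linearization argument directly.
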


\begin{proof}
Suppose that $0<y<x<1$, but the trajectory of $(x,y)$ does not
converge to $(1,0)$. By Lemma~\ref{quadrant}, for all $n$ we have
$(x_n,y_n)\notin V$. Choose $\delta>0$ such that $\delta<y<x<1-\delta$
and the inclusion from Lemma~\ref{invreg} holds with this $\delta$.
Then $(x_n,y_n)\in T_\delta$ for all $n$. Therefore we have
$\delta<y_n<x_n<1-\delta$, so
\[
\delta<\frac{y}{y+(1-y)\exp\left(a\sum_{k=0}^{n-1}(x_k-b)\right)}
<\frac{x}{x+(1-x)\exp\left(a\sum_{k=0}^{n-1}(y_k-b)\right)}<1-\delta.
\]
In the same way as in the proof of Theorem~\ref{t:Cesaro}, we get
\[
\left|\sum_{k=0}^{n-1}(x_k-b)\right|<\frac{2\log(1/\delta)}{a}
\ \ \ \textrm{and}\ \ \
\left|\sum_{k=0}^{n-1}(y_k-b)\right|<\frac{2\log(1/\delta)}{a}.
\]
Therefore,
\[
\sum_{k=0}^{n-1}(x_k-y_k)<\frac{4\log(1/\delta)}{a}.
\]
Since $x_k>y_k$ for all $k$, this means that the series
\[
\sum_{k=0}^\infty(x_k-y_k)
\]
converges, so $\lim_{k\to\infty}(x_k-y_k)=0$. However, this
contradicts Corollary~\ref{diagrep1}.

This proves that in the case $0<y<x<1$ the sequence $(x_n,y_n)$
converges to $(1,0)$.

If $y=0$ then $y_n=0$ for all $n$, and the sequence $(x_n)$ increases.
The limit of this sequence is a number $z>0$ such that $(z,0)$ is a
fixed point of $F$, and the only such point is $(1,0)$. Similarly, of
$x=1$ then $(x_n,y_n)$ converges to $(1,0)$.
\end{proof}

Taking into account what we observed about switching $x$ and $y$, we
can state a more general corollary.

\begin{corollary}\label{c:conv2d}
If a point $(x,y)\in[0,1]^2$ is below (respectively, above) the
diagonal, its $F_{a,b}$-trajectory converges to $(1,0)$ (respectively,
$(0,1)$).
\end{corollary}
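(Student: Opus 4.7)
The plan is to reduce the above-the-diagonal case to the below-the-diagonal case already handled by Theorem~\ref{conv2d}, using the symmetry of $F_{a,b}$ under coordinate swap. Let $\tau\colon [0,1]^2\to[0,1]^2$ denote the involution $\tau(x,y)=(y,x)$. The first step is to verify the conjugation relation $F_{a,b}\circ\tau=\tau\circ F_{a,b}$; this is immediate from inspection of~\eqref{map2d}, since interchanging $x$ with $y$ in the two coordinate expressions simply swaps them. This is the formal content of the remark made at the start of Section~\ref{s:2d} that ``switching $x$ and $y$ will result in switching $\xhat$ and $\yhat$.''

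Next, given $(x,y)\in[0,1]^2$ strictly above the diagonal, the swapped point $\tau(x,y)=(y,x)$ satisfies $0\le x<y\le 1$, so it lies in the regime covered by Theorem~\ref{conv2d}. Applying that theorem, $F_{a,b}^n(\tau(x,y))$ converges to $(1,0)$. Iterating the conjugation identity to get $F_{a,b}^n\circ\tau=\tau\circ F_{a,b}^n$ and applying $\tau$ to both sides yields
\[
F_{a,b}^n(x,y)=\tau\bigl(F_{a,b}^n(\tau(x,y))\bigr)\longrightarrow \tau(1,0)=(0,1),
\]
which is the claim for the upper triangle. The below-the-diagonal case is Theorem~\ref{conv2d} itself, handled there (including the boundary cases $y=0$ and $x=1$), so there is nothing further to do.

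There is no substantive obstacle: all of the analytic work lies in Theorem~\ref{conv2d} (which already handles boundary points of the lower triangle via the monotone-sequence argument at its end), and the corollary is merely the bookkeeping observation that $F_{a,b}$ is $\tau$-equivariant and that $\tau$ interchanges the two triangles and the two off-diagonal fixed points $(1,0)$ and $(0,1)$.
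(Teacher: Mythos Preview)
Your argument is correct and is exactly the approach the paper intends: the corollary is stated without proof, prefaced by ``Taking into account what we observed about switching $x$ and $y$,'' which is precisely the $\tau$-equivariance you spell out. There is nothing to add.
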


\section*{Funding}
Thiparat Chotibut gratefully acknowledges the startup research grant
SRES15111, and the SUTD-ZJU collaboration research grant ZJURP1600103.
Fryderyk Falniowski gratefully acknowledges the support of the
National Science Centre, Poland, grant 2016/21/D/HS4/01798 
and COST Action CA16228 ``European Network for Game Theory''.
Research of Micha{\l} Misiurewicz was partially supported by grant
number 426602 from the Simons Foundation.
Georgios Piliouras was partially supported by SUTD grant SRG ESD 2015
097, MOE AcRF Tier 2 Grant 2016-T2-1-170 and NRF 2018 Fellowship
NRF-NRFF2018-07.


\begin{thebibliography}{5}

\bibitem{Arora05themultiplicative}
S. Arora, E. Hazan, and S. Kale,
\newblock \textit{The multiplicative weights update method: a meta-algorithm and
  applications},
\newblock Theory of Computing, 8(1):121--164, 2012.

\bibitem{LMPY}
T.-Y. Li, M. Misiurewicz, G. Pianigiani and J. A. Yorke,
\textit{Odd chaos},
Phys. Lett. A \textbf{87} (1982), 271--273.

\bibitem{LY}
T.-Y. Li and J. A. Yorke,
\textit{Period three implies chaos},
Amer. Math. Monthly \textbf{82} (1975), 985--992.

\bibitem{rt}
M. Misiurewicz,
\textit{Rotation theory},
in Online Proceedings of the RIMS Workshop on ``Dynamical Systems and
Applications: Recent Progress'' (2006),\\
\verb|https://www.math.kyoto-u.ac.jp/~kokubu/RIMS2006/RIMS_Online_Proceedings.html|

\bibitem{AGT}
N. Nisan, T. Roughgarden, E. Tardos and V. Vazirani (Eds.),
``Algorithmic Game Theory'', Cambridge University Press, Cambridge,
2007.

\bibitem{PPP}
G. Palaiopanos, I. Panageas and G. Piliouras,
\newblock \textit{Multiplicative Weights Update with Constant
  Step-Size in Congestion Games: Convergence, Limit Cycles and Chaos},
\newblock In Advances in Neural Information Processing Systems (2017), 5874--5884.

\bibitem{ros}
R. W. Rosenthal,
\textit{A class of games possessing pure-strategy Nash equilibria},
Internat. J. Game Theory \textbf{2} (1973), 65--67.

\bibitem{sha}
A. N. Sharkovsky,
\textit{Co-existence of the cycles of a continuous mapping of the line
  into itself},
Ukrain. Math. Zh. \textbf{16} (1964), 61--71 (Russian).

\bibitem{SKSF}
A. N. Sharkovsky, S. F. Kolyada, A. G. Sivak and V. V. Fedorenko,
``Dynamics of one-dimensional maps'' (Mathematics and its
Applications, vol. 407),
Kluwer Academic Publishers Group, Dordrecht, 1997.

\end{thebibliography}
\end{document}